\documentclass[11pt]{article}

\usepackage[T1]{fontenc}
\usepackage[utf8]{inputenc}
\usepackage{lmodern}
\usepackage{amsmath,amssymb,amsthm,mathtools,bm}
\usepackage[margin=1in]{geometry}
\usepackage{microtype}
\usepackage{booktabs,tabularx,array}
\usepackage{enumitem}
\usepackage{algorithm,algpseudocode}
\usepackage{float}
\usepackage{needspace,etoolbox}
\usepackage[hidelinks]{hyperref}
\usepackage{bookmark}
\usepackage[nameinlink,capitalize,noabbrev]{cleveref}

\newcommand{\ResearchAgentSystem}{our laboratory's internal auto-research system}

\newif\ifanonymous
\anonymousfalse

\newcommand{\AuthorOne}{Haihan Zhang}
\newcommand{\AuthorTwo}{Wendao Wu}
\newcommand{\AuthorThree}{Chenheng Zhang}
\newcommand{\AuthorFour}{Haoxuan Li}
\newcommand{\AuthorFive}{Zhouchen Lin}
\newcommand{\AuthorSix}{Cong Fang}
\newcommand{\AuthorSeven}{Yanyi Li}
\newcommand{\AuthorEight}{Chunyuan Zheng}
\newcommand{\AffiliationOne}{Peking University}

\newcommand{\EmailOne}{zhanghaihan@stu.pku.edu.cn}
\newcommand{\EmailTwo}{wuwendao@stu.pku.edu.cn}
\newcommand{\EmailThree}{chenhengz@stu.pku.edu.cn}
\newcommand{\EmailFour}{hxli@pku.edu.cn}
\newcommand{\EmailFive}{ZLIN@pku.edu.cn}
\newcommand{\EmailSix}{fangcong@pku.edu.cn}
\newcommand{\EmailSeven}{liyanyi26@stu.pku.edu.cn}
\newcommand{\EmailEight}{cyzheng@stu.pku.edu.cn}

\ifanonymous
  \newcommand{\PDFAuthors}{Anonymous Authors}
\else
  \newcommand{\PDFAuthors}{\AuthorOne; \AuthorTwo; \AuthorThree; \AuthorFour; \AuthorFive}
\fi

\hypersetup{
  pdftitle={Near-Optimal Deterministic Exact-Value Complexity for Smooth Convex Optimization},
  pdfauthor={\PDFAuthors},
  pdfsubject={Exact function values, smooth convex optimization, adaptive deterministic algorithms, fixed-objective lower bounds, and resisting rotations}
}

\allowdisplaybreaks[1]

\setlist{leftmargin=*,itemsep=2pt,topsep=3pt}
\numberwithin{equation}{section}

\newcommand{\R}{\mathbb R}
\newcommand{\E}{\mathbb E}
\newcommand{\Prob}{\mathbb P}

\newcommand{\ip}[2]{\langle #1,#2\rangle}
\newcommand{\norm}[1]{\lVert #1\rVert}
\newcommand{\dist}{\operatorname{dist}}
\newcommand{\Lip}{\operatorname{Lip}}

\newcommand{\argminop}{\operatorname*{arg\,min}}

\newcommand{\Span}{\operatorname{span}}
\newcommand{\cF}{\mathcal F}

\newcommand{\cC}{\mathcal C}

\newcommand{\1}{\bm{1}}

\newcolumntype{P}[1]{>{\raggedright\arraybackslash}p{#1}}
\newcolumntype{Y}{>{\raggedright\arraybackslash}X}

\newtheorem{theorem}{Theorem}[section]
\newtheorem{lemma}[theorem]{Lemma}
\newtheorem{proposition}[theorem]{Proposition}
\newtheorem{corollary}[theorem]{Corollary}

\theoremstyle{definition}
\newtheorem{definition}[theorem]{Definition}

\theoremstyle{remark}
\newtheorem{remark}[theorem]{Remark}

\AtBeginEnvironment{theorem}{\Needspace{5\baselineskip}}
\AtBeginEnvironment{lemma}{\Needspace{5\baselineskip}}
\AtBeginEnvironment{proposition}{\Needspace{5\baselineskip}}
\AtBeginEnvironment{corollary}{\Needspace{5\baselineskip}}
\AtBeginEnvironment{definition}{\Needspace{5\baselineskip}}
\AtBeginEnvironment{assumption}{\Needspace{5\baselineskip}}
\AtBeginEnvironment{proof}{\Needspace{3\baselineskip}}

\crefname{theorem}{Theorem}{Theorems}
\crefname{lemma}{Lemma}{Lemmas}
\crefname{proposition}{Proposition}{Propositions}
\crefname{corollary}{Corollary}{Corollaries}
\crefname{section}{Section}{Sections}
\crefname{equation}{Eq.}{Eqs.}

\title{\textbf{Near-Optimal Deterministic Exact-Value Complexity for
Smooth Convex Optimization}}

\ifanonymous
  \author{Anonymous Authors}
\else
  \author{
    \AuthorTwo$^{1,*}$ \quad
    \AuthorOne$^{1,*}$ \quad
    \AuthorThree$^{1,*}$\\[0.20em]
    \AuthorSeven$^{1}$ \quad
    \AuthorEight$^{1}$\\[0.20em]
    \AuthorSix$^{1,\dagger}$ \quad
    \AuthorFour$^{1,\dagger}$ \quad
    \AuthorFive$^{1,\dagger}$\\[0.60em]
    \small $^{1}$\AffiliationOne\\[0.35em]
    \small
    \href{mailto:\EmailOne}{\texttt{\EmailOne}} \quad
    \href{mailto:\EmailTwo}{\texttt{\EmailTwo}} \quad
    \href{mailto:\EmailThree}{\texttt{\EmailThree}}\\[-0.05em]
    \small
    \href{mailto:\EmailOne}{\texttt{\EmailSeven}} \quad
    \href{mailto:\EmailTwo}{\texttt{\EmailEight}} \\[-0.05em]
    \small
    \href{mailto:\EmailSix}{\texttt{\EmailSix}} \quad
    \href{mailto:\EmailFour}{\texttt{\EmailFour}} \quad
    \href{mailto:\EmailFive}{\texttt{\EmailFive}}\\[0.35em]
    \small $^{*}$Equal contribution.    \small $^{\dagger}$Corresponding authors.
  }
\fi
\date{}

\begin{document}
\maketitle

\begin{abstract}
We study the deterministic oracle complexity of minimizing a globally
$\beta$-smooth convex function when the only oracle feedback is one exact
real function value.  The algorithm may perform arbitrary exact-real
computation, but every query and the terminal output must lie in the public
ball $B_2^d(R)$, and the unique minimizer is promised to lie in
$B_2^d(R/2)$.  Writing $Q=\beta R^2/\epsilon$, we first give an explicit
bounded-query upper bound
\[
  N_{\epsilon,\mathrm{det-ad}}^{\mathrm{val}}(d,R,\beta)
  \le 6d\sqrt Q
  \qquad(0<\epsilon\le\beta R^2),
\]
obtained by coordinate finite differences coupled with an error-robust
accelerated projected method.  The main contribution is a lower
bound that matches this upper bound whenever the square-root branch is active: there are universal constants
$c,c_\epsilon>0$ such that, for all sufficiently large $d$ and
$0<\epsilon\le c_\epsilon\beta R^2$,
\[
  N_{\epsilon,\mathrm{det-ad}}^{\mathrm{val}}(d,R,\beta)
  \ge c d\min\left\{
    \sqrt Q,
    \left(\frac{d}{\log(ed)}\right)^{1/3}
  \right\}.
\]
Consequently the minimax complexity is
$\Theta(d\sqrt{\beta R^2/\epsilon})$ throughout the moderate-accuracy range
\[
  \beta R^2\left(\frac{\log(ed)}{d}\right)^{2/3}
  \le \epsilon \le c_\epsilon\beta R^2.
\]
The lower bound is not a noisy-information or independent-coordinate
argument: exact scalar values can encode arbitrarily much information, so
we build one fixed smooth convex objective whose transcript is consistent
with every adaptive reply.  The construction combines a Moreau-smoothed
biased max chain, an exact prefix-shielding identity, and a batched
delayed-rotation compiler.  The present theorem settles the deterministic
bounded-query square-root branch, while leaving the high-accuracy regime,
randomized algorithms, and unrestricted query locations as separate open
extensions.
\end{abstract}

\noindent\textbf{Keywords:} zeroth-order optimization; exact function values; smooth convex optimization; adaptive lower bounds; accelerated methods; Moreau envelope; resisting oracle; upper bound.

\noindent\textbf{AI Usage.}
Nearly the entire research pipeline for this paper was carried out by
\ResearchAgentSystem{}, powered by GPT-5.6 Sol. The system also conducted a
Lean-backed article audit of the resulting manuscript. The authors subsequently reviewed and approved the
mathematical claims, presentation, and formal artifacts, and take
responsibility for the final manuscript.The complete Lean audit report and the system's technical report will be made public at a later date.

\section{Introduction}
\label{sec:introduction}
A zeroth-order oracle returns only the scalar value of an objective at the
queried point.  In stochastic or finite-precision models, the weakness of
such feedback is visible through standard information inequalities.  The
exact-value model is more subtle.  A single real number may encode
arbitrarily much information, and a fully adaptive deterministic algorithm
may choose each new query from the complete exact transcript.  As a result,
lower bounds based on finite alphabets, independent noisy observations, or
coordinate-wise information accounting do not directly apply.

This paper studies the exact-value analogue of smooth convex minimization.
The unknown objective $f:\R^d\to\R$ is convex, continuously differentiable,
and has globally $\beta$-Lipschitz gradient.  The algorithm is allowed
unlimited exact-real internal computation, but all oracle queries and the
terminal output must lie in the public ball
\[
  X=B_2^d(R).
\]
The minimizer is promised to be unique and to lie in the interior ball
$B_2^d(R/2)$.  The performance criterion is objective suboptimality
$f(\widehat x)-f^\star\le\epsilon$, and the natural scale is
\[
  Q=\frac{\beta R^2}{\epsilon}.
\]

\paragraph{The unresolved exact-value smooth frontier.}
The upper-bound side is immediate in spirit but still needs careful
bounded-query accounting: use exact values to form finite-difference
gradients and then run an accelerated first-order method.  This gives the
classical $d\sqrt Q$ rate.  The lower-bound side is the real obstruction.
Unlike noisy zeroth-order optimization, exact values do not limit the number
of transmitted bits.  Unlike first-order lower bounds, a value-only
algorithm is not forced to reveal a gradient direction.  Unlike nonsmooth
max-affine hard instances, a smooth hard instance must preserve global
Lipschitz continuity of the gradient.  Finally, a resisting-oracle proof is
valid only if all adaptive replies are values of one fixed objective chosen
for the algorithm, not values of objectives that change from round to round.

Recent exact-value work has clarified the nonsmooth convex setting, where
near-quadratic dimension lower bounds show that scalar exact values can be
surprisingly weak in high dimension.  The smooth setting has a different
shape: finite differences provide an $O(d\sqrt Q)$ upper bound, but before
the present argument it was not clear whether a deterministic adaptive
algorithm could exploit exact real values to beat this rate in the
moderate-accuracy regime, or whether the fixed-objective consistency
barrier could be overcome by a smooth lower-bound construction.

\paragraph{Our answer.}
We prove that the square-root branch is optimal under bounded deterministic
queries.  For all sufficiently large dimensions,
\begin{equation}
\label{eq:intro-main-lower}
  N_{\epsilon,\mathrm{det-ad}}^{\mathrm{val}}(d,R,\beta)
  \ge
  c d\min\left\{
    \sqrt{\frac{\beta R^2}{\epsilon}},
    \left(\frac{d}{\log(ed)}\right)^{1/3}
  \right\},
\end{equation}
while the finite-difference accelerated method gives
\begin{equation}
\label{eq:intro-main-upper}
  N_{\epsilon,\mathrm{det-ad}}^{\mathrm{val}}(d,R,\beta)
  \le
  6d\sqrt{\frac{\beta R^2}{\epsilon}}.
\end{equation}
Combining the two bounds yields
\begin{equation}
\label{eq:intro-matching-rate}
  \boxed{
  N_{\epsilon,\mathrm{det-ad}}^{\mathrm{val}}(d,R,\beta)
  =\Theta\left(d\sqrt{\frac{\beta R^2}{\epsilon}}\right)}
\end{equation}
whenever
\[
  \beta R^2\left(\frac{\log(ed)}{d}\right)^{2/3}
  \le \epsilon \le c_\epsilon\beta R^2.
\]
Thus, in the regime where the finite-difference acceleration branch has
not yet reached the high-dimensional localization barrier, exact scalar
values are no more powerful than using them to reconstruct gradients at a
coordinate cost $d$.

The lower bound also identifies the present limitation of the technique.
Its hard chain length $m$ must obey
$m^3\log(ed)\lesssim d$, so the lower bound saturates at
$d^{4/3}/\log^{1/3}(ed)$ at very high accuracy.  This is why the result is
near-optimal rather than a complete characterization of all accuracy scales.
The high-accuracy endpoint, randomized policies, and unbounded query
locations remain outside the theorem's scope.

\subsection{Contributions}
The paper makes five main contributions.
\begin{enumerate}[label=\textup{(C\arabic*)}]
\item \textbf{An explicit deterministic upper bound with bounded queries.}
We give an explicit finite-difference implementation of an accelerated
projected method and prove
\[
  N_{\epsilon,\mathrm{det-ad}}^{\mathrm{val}}(d,R,\beta)
  \le(d+1)\left\lceil2\sqrt{\beta R^2/\epsilon}\right\rceil
  \le6d\sqrt{\beta R^2/\epsilon}.
\]
The proof tracks deterministic gradient error, projection geometry, and all
finite-difference query locations.

\item \textbf{An exact shielding identity for a smooth value chain.}
We smooth a biased max chain by a Moreau envelope and use its simplex dual
to prove a value-level prefix-shielding identity.  Once a tail of hidden
coordinates lies inside a small window, every dual optimizer is supported
on the visible prefix, so the exact value is independent of all later
coordinates.

\item \textbf{A batched delayed-rotation compiler.}
Adaptive queries are grouped into blocks of order $d$.  During a block, the
next hidden direction has not yet been selected; after the block is fixed,
that direction is chosen orthogonal to the whole block and nearly
orthogonal to the past.  A sentinel direction additionally shields the
terminal output.

\item \textbf{A fixed-objective deterministic lower bound.}
The online transcript is realized by one globally smooth convex objective
$F_U$, with a unique minimizer in $B_2^d(R/2)$ and strong-convexity modulus
$\beta/(4096m^2)$.  This proves that fewer than $\Theta(dm)$ exact-value
queries leave error $\Omega(\beta R^2/m^2)$ whenever
$m^3\log(ed)\lesssim d$.

\item \textbf{A clean rate frontier and limitation statement.}
Optimizing the integer lower bound over $m$ gives
\eqref{eq:intro-main-lower}, which matches the upper bound throughout the
moderate-accuracy range.  The same calculation exposes the cubic saturation
that prevents this construction from closing the high-accuracy endpoint.
\end{enumerate}

\subsection{Technical overview}
The upper bound converts exact function values into controlled first-order
information.  At a base point $y\in B_2^d(R/2)$, the $d+1$ values
$f(y),f(y+he_1),\ldots,f(y+he_d)$ produce a forward-difference gradient
$g_h(y)$ with error at most $\beta h\sqrt d/2$.  Choosing $h$ sufficiently
small makes this an inexact first-order oracle on the inner ball.  A
standard estimate-sequence proof, written with explicit lower and upper
oracle errors, gives an accelerated suboptimality bound.  Because the base
points stay in $B_2^d(R/2)$ and $h\le R/2$, all displaced queries remain
inside $B_2^d(R)$.

The lower bound starts from the biased chain
\[
  g_{m,\rho}(y)=\max_{1\le i\le m}\{y_i-8\rho i\},
\]
and replaces it by its Moreau envelope $h_{m,\rho}$.  The dual form is a
maximization over the simplex.  If all coordinates from index $j$ onward
are in $[-\rho,\rho]$, moving any mass placed after $j$ to coordinate $j$
strictly improves the dual objective.  Hence every optimizer is supported
on the prefix $\{1,\ldots,j\}$, and the smoothed exact value does not see
the tail.

The chain is embedded through an orthonormal frame
$U=(u_1,\ldots,u_m)$.  Queries are processed in blocks of size
$b=\lfloor d/4\rfloor$.  The replies in block $j$ depend only on
$u_1,\ldots,u_{j-1}$, so the full adaptive block is known before $u_j$ is
chosen.  A spherical-cap argument then selects $u_j$ orthogonal to that
block and with small projection on earlier queries.  After the final query,
a sentinel direction is chosen orthogonal to the output.  Exact shielding
then proves that the transcript coincides pointwise with the final fixed
objective.  Since the output has not progressed to the last chain
coordinate, comparison with a point moving along all hidden directions gives
an error of order $\beta R^2/m^2$.

The geometric bottleneck is the simultaneous small-projection condition.
With $\rho\asymp R/m^{3/2}$, cap avoidance requires
$m^3\log(ed)\lesssim d$.  Therefore the lower bound matches
$d\sqrt Q$ while $\sqrt Q\lesssim(d/\log(ed))^{1/3}$ and saturates beyond
that threshold.

\subsection{Scope and organization}
The theorem is an information-complexity result for deterministic adaptive
algorithms, exact scalar function values, and bounded query/output
locations.  It does not claim polynomial arithmetic complexity, randomized
lower bounds, or unrestricted-query lower bounds.  The hard objectives are
$C^1$ with globally Lipschitz gradients, but the construction should not be
quoted as a $C^2$ or $C^\infty$ lower bound without an additional smoothing
step.

\cref{sec:related} compares the result with nearby oracle models and rate
statements.  \cref{sec:main} states the formal model and main theorems.
\cref{sec:upper} proves the upper bound.  \cref{sec:overview}--\cref{sec:gap}
construct the smooth fixed-objective lower bound, and \cref{sec:rate}
optimizes the integer theorem into the dimension--accuracy statement.
\cref{sec:discussion} records the precise near-optimal interpretation and
limitations.

\section{Related work and rate comparison}
\label{sec:related}
\paragraph{Exact values versus noisy values.}
Stochastic and randomized zeroth-order optimization has a mature theory in
which one or two noisy function evaluations are converted into randomized
gradient estimates.  Representative results include the lower bounds of
Shamir~\cite{shamir2013complexity}, the two-point rates of Duchi et
al.~\cite{duchi2015optimal}, and random gradient-free methods such as
Nesterov and Spokoiny~\cite{nesterov2017random}.  Those results are not
pathwise exact-real lower bounds: their information restrictions come from
noise, randomness, or expected performance criteria.

\paragraph{Smooth first-order baselines.}
For smooth convex minimization with exact gradients, accelerated methods
achieve the familiar $O(\sqrt Q)$ iteration complexity, with matching
information-based lower bounds in standard first-order models; see, for
example, Nemirovski and Yudin~\cite{nemirovski1983problem} and Drori and Teboulle~\cite{drori2017exact}.  A value-only algorithm
can simulate one gradient by $d+1$ coordinate finite differences, leading
to the $O(d\sqrt Q)$ upper bound proved here with bounded-query details.
The contribution of this paper is the corresponding deterministic
exact-value lower bound in the moderate-accuracy regime.

\paragraph{Exact-value convex lower bounds.}
Recent work on nonsmooth Lipschitz convex optimization shows that exact
scalar values do not automatically make derivative-free optimization easy:
near-quadratic dimension lower bounds have been proved in high-dimensional
settings~\cite{kerger2026closing,zhang2026nearoptimal}.  Smoothness changes
the hard-instance geometry.  A max-affine construction is no longer
admissible, and a generic smoothing may leak information about many hidden
pieces through one exact value.  The Moreau-envelope shielding identity
below is designed to retain both smoothness and exact transcript
invariance.

\paragraph{Resisting rotations and fixed transcripts.}
Delayed rotations and zero-chain ideas are common in oracle lower bounds
for convex and nonconvex optimization, including parallel and randomized
settings~\cite{carmon2020lowerI,carmon2020lowerII,diakonikolas2019parallel}.  The exact-value
model imposes an additional consistency requirement: the answers returned
online must be values of one final fixed objective.  Our batched compiler
selects directions after adaptive blocks are fixed, but the exact shielding
lemma proves that no previous reply changes when the frame is completed.

To state the rate comparison, write
\[
  Q=\frac{\beta R^2}{\epsilon}.
\]
The table suppresses universal constants and logarithmic factors where
indicated; in the last row, write $\bar d=d/\log(ed)$.

\begin{table}[H]
\centering
\caption{Representative rates for value and first-order convex optimization models.  Here $Q=\beta R^2/\epsilon$ and $\bar d=d/\log(ed)$.  Rows with different feedback and success conventions are not directly comparable.}
\label{tab:related-rates}
\scriptsize
\setlength{\tabcolsep}{2.0pt}
\renewcommand{\arraystretch}{1.18}
\begin{tabularx}{\linewidth}{@{}P{1.95cm}P{1.90cm}P{2.70cm}Y P{1.65cm}@{}}
\toprule
Setting & Feedback model & Representative complexity & Scope / caveat & Reference \\
\midrule
Smooth convex minimization
& exact gradients
& $\Theta(\sqrt Q)$
& Classical first-order oracle model; not value-only.
& \cite{drori2017exact} \\
\addlinespace
Smooth convex minimization
& exact scalar values
& $O(d\sqrt Q)$
& Coordinate finite differences plus accelerated first-order method; this paper gives bounded-query accounting.
& This work, upper bound \\
\addlinespace
Nonsmooth Lipschitz convex minimization
& exact scalar values
& near $d^2$ lower bounds in high dimension
& Different regularity class; hard instances are nonsmooth support or max-type functions.
& \cite{kerger2026closing,zhang2026nearoptimal} \\
\addlinespace
Continuous / general convex localization
& exact scalar values
& $\widetilde O(d^2)$
& Protasov's bound is $O(d^2\log(d+1)\log(1/\delta))$ for relative error; not a smooth-specific lower bound.
& \cite{protasov1996algorithms} \\
\addlinespace
Smooth convex minimization
& exact scalar values
& $\Omega(d\min\{\sqrt Q,\bar d^{1/3}\})$
& Deterministic adaptive algorithms; bounded queries and outputs; one fixed globally smooth convex objective.
& This work, lower bound \\
\bottomrule
\end{tabularx}
\end{table}

The last row matches the finite-difference upper bound exactly when
$Q\le(d/\log(ed))^{2/3}$.  Beyond that threshold the current lower bound
saturates, so the paper deliberately separates the matched square-root
branch from the still-open high-accuracy endpoint.

\section{Problem formulation and main result}\label{sec:main}
\label{sec:model}

\subsection{Objective class and exact-value algorithms}\label{sec:model-class}

Throughout, $\log$ denotes the natural logarithm, $d\in\mathbb{N}$, and $R,\beta>0$.  Let
\[
X:=B_2^d(R)=\{x\in\R^d:\norm{x}_2\le R\}.
\]

\begin{definition}[Objective class]
\label{def:class}
Let $\cF_\beta(d,R)$ be the set of all functions $f:\R^d\to\R$ such that
\begin{enumerate}[label=(\roman*)]
\item $f$ is convex and continuously differentiable;
\item $\nabla f$ is globally $\beta$-Lipschitz in Euclidean norm;
\item $f$ has a unique global minimizer $x_f^\star\in B_2^d(R/2)$.
\end{enumerate}
Write $f^\star=\min_x f(x)$.
\end{definition}

A deterministic adaptive algorithm with budget $T$ consists of Borel maps
\[
Q_t:(X\times\R)^{t-1}\to X,
\qquad t=1,\ldots,T,
\]
and a Borel terminal map
\[
Q_{\mathrm{out}}:(X\times\R)^T\to X.
\]
Dependence on the public parameters $(d,R,\beta,\epsilon)$ is suppressed.  Recursively,
\[
x_t=Q_t((x_1,f(x_1)),\ldots,(x_{t-1},f(x_{t-1}))),
\]
and
\[
\widehat x=Q_{\mathrm{out}}((x_1,f(x_1)),\ldots,(x_T,f(x_T))).
\]
Repeated calls are charged unless a stored value is reused.  Internal exact-real computation is free, and the objective is fixed throughout the interaction.

\begin{definition}[Minimax exact-value complexity]
For $\epsilon>0$, define
\[
N_{\epsilon,\mathrm{det-ad}}^{\mathrm{val}}(d,R,\beta)
:=
\inf\left\{
T\in\mathbb{N}_0:
\begin{array}{l}
\text{there exists a deterministic adaptive $T$-call algorithm}\\[1mm]
\text{such that }\sup_{f\in\cF_\beta(d,R)}
[f(\widehat x_f)-f^\star]\le\epsilon
\end{array}
\right\}.
\]
As usual, $\inf\varnothing=+\infty$.
\end{definition}

\subsection{Main upper and lower bounds}\label{sec:main-results}

\begin{theorem}[Zeroth-order accelerated upper bound]
\label{thm:upper}
For every $d\in\mathbb{N}$, $R,\beta>0$, and $0<\epsilon\le\beta R^2$,
\begin{equation}
\label{eq:upper-rate}
N_{\epsilon,\mathrm{det-ad}}^{\mathrm{val}}(d,R,\beta)
\le
(d+1)\left\lceil2\sqrt{\frac{\beta R^2}{\epsilon}}\right\rceil
\le
6d\sqrt{\frac{\beta R^2}{\epsilon}}.
\end{equation}
All oracle calls made by the algorithm lie in $B_2^d(R)$, and its output lies in $B_2^d(R/2)$.
\end{theorem}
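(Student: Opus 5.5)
We will construct an explicit algorithm with $K:=\lceil 2\sqrt{\beta R^2/\epsilon}\rceil$ outer iterations, each using exactly $d+1$ oracle calls, and run it on the inner ball $C:=B_2^d(R/2)$. Since $x_f^\star\in C$, minimizing $f$ over $C$ has the same optimal value $f^\star$. At a base point $y\in C$ we query $f(y),f(y+he_1),\dots,f(y+he_d)$ and form $g_h(y)_i=(f(y+he_i)-f(y))/h$. The descent lemma gives $|g_h(y)_i-\partial_i f(y)|\le\beta h/2$, hence $\norm{g_h(y)-\nabla f(y)}\le \beta h\sqrt d/2=:\delta$. Because $\norm{y}\le R/2$ and we will take $h\le R/2$, every query satisfies $\norm{y+he_i}\le R$, so it lies in $X$. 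The output will be a projected point, so it lies in $C\subseteq B_2^d(R/2)$.

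With the inexact gradient in hand, we run a projected accelerated method on $C$ (FISTA or Nesterov's similar-triangles method with step $1/\beta$). Every base point $y_k$ is a convex combination of points of $C$ or a projection onto $C$, so it stays in $C$. We track the error with the inexact-oracle framework in the style of Devolder--Glineur--Nesterov. For $x,y\in C$ we use
\[
f(y)+\ip{g_h(y)}{x-y}-\delta R\le f(x)\le f(y)+\ip{g_h(y)}{x-y}+\tfrac{\beta}{2}\norm{x-y}^2+\delta R,
\]
which holds because $\norm{x-y}\le R$. In an estimate-sequence proof these two-sided errors accumulate linearly, giving
\[
f(x_K)-f^\star\le \frac{2\beta\norm{x_0-x^\star}^2}{(K+1)^2}+c_0K\delta R.
\]
Taking $x_0=0$, we have $\norm{x_0-x^\star}^2\le R^2/4$. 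With $K\ge 2\sqrt{\beta R^2/\epsilon}$ the first term is at most $\beta R^2/(2K^2)\le\epsilon/8$. We then choose $h=\min\{R/2,\ \epsilon/(4c_0K\beta R\sqrt d)\}$, which makes the accumulated error term at most $\epsilon/8$. The output is $\widehat x=x_K$, and it uses $(d+1)K$ calls.

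The output must be one of the projected iterates. The variant in which $x_{k+1}=\Pi_C(y_k-g_h(y_k)/\beta)$ keeps this in $C$, and computing $x_K$ needs no extra queries. The step needing most care is the inexact accelerated analysis with projection, because the error must enter only as $O(K\delta R)$. I would write the similar-triangles estimate sequence with $A_k\ge k^2/(4\beta)$ and carry $\pm\delta R$ through each step, so that the total error is $\sum_k a_k\cdot 2\delta R/A_K\le 2\delta R\cdot O(K)$. Since $h$ is a free parameter, a worse error constant only shrinks $h$ and does not affect the query count.

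It remains to prove $(d+1)K\le 6d\sqrt{Q}$. The hypothesis $\epsilon\le\beta R^2$ gives $\sqrt Q\ge1$. Then $K\le 2\sqrt Q+1\le 3\sqrt Q$, and $d+1\le 2d$, so $(d+1)K\le 6d\sqrt Q$.
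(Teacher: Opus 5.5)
Your proposal is correct and follows the paper's proof essentially step for step: forward differences at base points in $B_2^d(R/2)$ with $h\le R/2$, a two-sided inexact oracle justified by $\operatorname{diam}B_2^d(R/2)=R$, the projected similar-triangles estimate-sequence method with quadratically growing $A_K$ and linearly accumulating error, then shrinking $h$, and the same $(d+1)K\le 6d\sqrt{\beta R^2/\epsilon}$ count. The only real difference is cosmetic: you keep curvature $\beta$ and absorb the gradient error into an additive $\delta R$ in the upper inequality, whereas the paper uses curvature $2\beta$ and the smaller error $\delta_g^2/(2\beta)$ via Young's inequality.

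Two small corrections. First, discard the FISTA option. Its extrapolated point $x_{k+1}+\gamma_k(x_{k+1}-x_k)$ is not a convex combination, so it can leave $B_2^d(R/2)$ and even $B_2^d(R)$, and the finite-difference queries are then no longer guaranteed admissible. Second, the $O(K)$ factor arises because the per-step oracle errors enter the estimate sequence with weights $A_k$ and $A_{k+1}$, not $a_k$. Your expression $\sum_k a_k\cdot 2\delta R/A_K$ is just $2\delta R$, although your final $O(K\delta R)$ bound is right.
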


The lower bound is most transparent in an integer-parameter form.

\begin{theorem}[Integer-parameter lower bound]
\label{thm:integer}
There are universal constants $c_0>0$ and $d_0\in\mathbb{N}$ such that the following holds.  Let $d\ge d_0$ and let $m$ be an integer satisfying
\begin{equation}
\label{eq:m-conditions}
1\le m\le\frac d4,
\qquad
m^3\log(ed)\le c_0d.
\end{equation}
Every deterministic adaptive algorithm making fewer than $dm/8$ exact-value calls admits a hard function $F\in\cF_\beta(d,R)$ for which
\begin{equation}
\label{eq:integer-gap}
F(\widehat x)-F^\star
\ge
2^{-17}\frac{\beta R^2}{m^2}.
\end{equation}
Moreover, the constructed hard function is $\beta/(4096m^2)$-strongly convex; in fact, its global strong-convexity modulus equals this value.
\end{theorem}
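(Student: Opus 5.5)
We follow the route laid out in the technical overview: a smoothed biased max chain, hidden behind a rotated frame, built by a resisting construction that fixes the objective only after the transcript is complete.

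\textbf{Hard family.} Fix $\rho\asymp R/m^{3/2}$ and $g_{m,\rho}(y)=\max_{i\le m}\{y_i-8\rho i\}$. Let $h_{m,\rho}$ be its Moreau envelope with parameter $\lambda$, so $\nabla h_{m,\rho}$ is $1/\lambda$-Lipschitz. For an orthonormal frame $U=(u_1,\dots,u_m)$ plus a sentinel direction $u_0$, define
\[
F_U(x)=a\,h_{m,\rho}(U^\top x)+\tfrac{\beta}{8192m^2}\norm{x}^2 .
\]
Here $a$ is a scaling chosen so that the envelope contributes at most $\beta/2$ to the smoothness constant. If needed, the sentinel enters as an extra leading chain coordinate. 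The quadratic term gives modulus exactly $\beta/(4096m^2)$, since the envelope has a flat direction orthogonal to $U$; this yields the exact-modulus claim. It also gives a unique minimizer. That minimizer has norm $O(R)$, and after calibrating constants it lies in $B_2^d(R/2)$.

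\textbf{Shielding.} Write the envelope's dual as a maximization over the simplex. If the coordinates $\ip{u_i}{x}$ for $i\ge j$ all lie in $[-\rho,\rho]$, then moving dual mass from any $i>j$ to $j$ strictly increases the dual objective, because the bias $8\rho i$ dominates the window width. Hence every dual optimizer is supported on $\{1,\dots,j\}$. So $F_U(x)$ depends only on $u_1,\dots,u_{j}$, and in fact only on $u_1,\dots,u_{j-1}$ once the quadratic term is absorbed, since the $j$-th coordinate enters only through the window bound.

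\textbf{Batched delayed rotation.} Split the fewer than $dm/8$ queries into at most $m/2$ blocks of size $b=\lfloor d/4\rfloor$. Block $j$ is answered using $u_1,\dots,u_{j-1}$ only, with the formula from the shielding identity. After block $j$ is fixed, choose $u_j$ in the orthogonal complement of that block's span (dimension at most $d/4$) and of $u_1,\dots,u_{j-1}$. Additionally require $|\ip{u_j}{x}|\le\rho$ for all earlier queries. Since $\norm{x}\le R$ and $\rho/R\asymp m^{-3/2}$, a uniform unit vector in a subspace of dimension at least $d/2$ violates one such cap with probability $\exp(-c\,d\rho^2/R^2)=\exp(-c\,d/m^3)$. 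A union bound over at most $dm$ queries succeeds when $m^3\log(ed)\le c_0d$, so a good $u_j$ exists. Induction then shows that every query stays in the tail window for all later directions, so every online reply equals the final fixed $F_U$ at that point. After the output $\widehat x$ is produced, pick the remaining directions (including the sentinel) orthogonal to $\widehat x$ by the same argument. This leaves at least the last coordinates of $\widehat x$ in the window.

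\textbf{Gap.} Shielding gives $F_U(\widehat x)\ge$ the value of the prefix-$(m/2)$ problem. Compare this with the point $x^\circ=-s\sum_i u_i$, where $s\asymp R/\sqrt m$ and $\norm{x^\circ}\le R/2$. For $x^\circ$, the chain max is about $-s$ minus the bias terms, so its value improves on the prefix value by $\Omega(\beta R^2/m^2)$ after subtracting the envelope smoothing error $O(\lambda)$ and the regularizer $O(\beta R^2/m^2)$. The constants are tuned so the net gap is at least $2^{-17}\beta R^2/m^2$.

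\textbf{Main obstacle.} I expect the hardest part to be the constant balancing. The same $\rho$ must give strict shielding through the bias $8\rho i$, make the cap union bound work, and keep the smoothing and regularizer errors below the chain gap. Tied to this is verifying rigorously that the online replies use only the previously chosen directions. I would first fix $\lambda=\rho/(\text{const})$ and check the dual exchange inequality.
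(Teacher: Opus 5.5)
Your architecture is the paper's: the Moreau-smoothed biased max chain, the simplex-dual mass-moving argument for shielding, blocks of size $\lfloor d/4\rfloor$ with $u_j$ exactly orthogonal to block $j$ and cap-avoiding on earlier queries, a sentinel orthogonal to $\widehat x$, the comparison point $-s\sum_i u_i$, and the exact modulus read off along a direction orthogonal to $\operatorname{range}(U)$.

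\textbf{The gap: localizing the minimizer.} Your one substantive departure is dropping the frame-independent penalty $\frac\beta8\dist(x,B_2^d(R_0))^2$. That turns your sentence ``the minimizer has norm $O(R)$ and after calibrating constants lies in $B_2^d(R/2)$'' into an unproved step, and the obvious proof of it fails. At the minimizer, $x^\star=-\frac a\eta U p^\star$ with $p^\star=\nabla h_{m,\rho}(U^\top x^\star)\in\Delta_m$. So the gradient bound only gives $\norm{x^\star}_2\le a/\eta$. Imposing $a/\eta\le R/2$ then caps what your comparison can certify at $\sup_s\{as-\frac\eta2 ms^2\}=\frac{a^2}{2\eta m}\le\frac{\eta R^2}{8m}$, which is a factor $m$ too small.

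Conversely, $\norm{x^\star}_2\ge a/(\eta\sqrt m)$ always. The gap at a progress-limited point such as $\widehat x=0$ is at most $\frac{a^2}{2\eta m}+8am\rho$. So with $\eta=\beta/(4096m^2)$ pinned by the statement, the target $2^{-17}\beta R^2/m^2$ forces $\norm{x^\star}_2$ to lie within roughly a factor $2$ of $M:=a/(\eta\sqrt m)$, and you need an argument that shows this. Simply deleting the penalty from the paper's parameters fails outright: with $A=\beta\rho/8$, the unpenalized minimizer has norm at least $A/(\eta\sqrt m)=8R_0=2R$.

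\textbf{How to repair it.} Compare $F_U(x^\star)\le F_U(x^\circ)$ at $s=M/\sqrt m$, and use $g_{m,\rho}(y)\ge\max_i y_i-8m\rho\ge-\norm{y}_2/\sqrt m-8m\rho$. This gives $(\norm{x^\star}_2-M)^2\le 16am\rho/\eta$. Now take $\rho=M/(1024m^{3/2})$, so that $2a/\rho=\beta/2$. Then $\norm{x^\star}_2\le\frac98 M$, while your comparison gives a gap of at least $\frac{\eta M^2}{2}(1-\frac1{64})$. Choosing $M=0.44R$ yields $\norm{x^\star}_2<R/2$ and a gap of about $3\cdot 2^{-17}\beta R^2/m^2$; the smaller $\rho$ only shrinks $c_0$.

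The paper sidesteps this fixed-point estimate entirely. Its penalty gives $\norm{x^\star}_2\le R_0+4A/\beta$ directly from $\norm{\nabla h_{m,\rho}}_2\le 1$. That decouples localization from the size of $A$ and leaves a factor-$12$ margin at $x^\circ$ between the chain gain $\frac{3\beta R_0^2}{2048m^2}$ and the regularizer cost $\frac{\beta R_0^2}{8192m^2}$.

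\textbf{Smaller points.}
\begin{itemize}
\item Your shielding paragraph claims the value depends only on $u_1,\dots,u_{j-1}$ ``once the quadratic term is absorbed''. That is false as stated: $h_{m,\rho}(y_1,\dots,y_j,0,\dots,0)$ genuinely depends on $y_j$ (for $j=1$ it equals $y_1-8\rho-\rho/4$). Independence from $u_j$ comes from the exact zero $y_j=\ip{u_j}{x}=0$. Your choice of $u_j$ orthogonal to block $j$ does supply this, so the consistency induction goes through.
\item The sentinel need not be an extra leading chain coordinate. It is just $u_{q+1}$, chosen orthogonal to the incomplete block and to $\widehat x$.
\item The block count is only approximately $m/2$. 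All that is needed is $q+1\le m$, which follows from $T<\lfloor d/4\rfloor m$.
\end{itemize}
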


\begin{theorem}[Dimension--accuracy lower bound]
\label{thm:main}
There exist universal constants $c,c_\epsilon>0$ and $d_0\in\mathbb{N}$ such that for every
\[
d\ge d_0,
\qquad R,\beta>0,
\qquad 0<\epsilon\le c_\epsilon\beta R^2,
\]
one has
\begin{equation}
\label{eq:main-rate}
\boxed{
N_{\epsilon,\mathrm{det-ad}}^{\mathrm{val}}(d,R,\beta)
\ge
c d\min\left\{
\sqrt{\frac{\beta R^2}{\epsilon}},
\left(\frac{d}{\log(ed)}\right)^{1/3}
\right\}.}
\end{equation}
\end{theorem}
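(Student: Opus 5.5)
We derive \cref{thm:main} from \cref{thm:integer} by choosing the chain length $m$ so that the certified gap $2^{-17}\beta R^2/m^2$ exceeds $\epsilon$. Throughout, $Q=\beta R^2/\epsilon$. The lower bound is then $dm/8$, and the work is to make $m$ as large as both the accuracy and the constraint \eqref{eq:m-conditions} allow.

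The natural choice is
\[
m:=\min\left\{\left\lceil 2^{9}\sqrt{\epsilon/(\beta R^2)}^{\,-1}\right\rceil-1,\ \left\lfloor (c_0d/\log(ed))^{1/3}\right\rfloor\right\}.
\]
Equivalently, $m$ is the largest integer satisfying both $m^2<2^{-17}Q$ and $m^3\log(ed)\le c_0 d$. The accuracy cap is equivalent to $m<2^{-17/2}\sqrt Q$, and it guarantees $2^{-17}\beta R^2/m^2>\epsilon$. Any algorithm with fewer than $dm/8$ calls therefore fails on the hard $F$ of \cref{thm:integer}.

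We check \eqref{eq:m-conditions} in turn.
\begin{itemize}
\item The cubic condition holds by construction.
\item The condition $m\le d/4$ follows from $m\le (c_0d)^{1/3}\le d/4$ once $d\ge d_0$.
\item The condition $m\ge1$ needs $Q>2^{17}$ and $(c_0d/\log(ed))^{1/3}\ge1$. The first holds by taking $c_\epsilon<2^{-17}$, say $c_\epsilon=2^{-19}$. The second holds by enlarging $d_0$.
\end{itemize}
Since every algorithm with fewer than $dm/8$ calls fails, $N^{\mathrm{val}}_{\epsilon,\mathrm{det-ad}}\ge \lceil dm/8\rceil\ge dm/8$.

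It remains to compare $m$ with the right-hand side of \eqref{eq:main-rate}. Write $a=2^{-17/2}\sqrt Q$, so that $a\ge 2^{1/2}\cdot 2^{-17/2}\cdot 2^{19/2}... $; concretely, $c_\epsilon=2^{-19}$ gives $a\ge 2$. The largest integer strictly below $a$ is then at least $a/2$. Likewise, the floor of $b=(c_0d/\log(ed))^{1/3}\ge1$ is at least $b/2$. Hence
\[
m\ge \tfrac12\min\{a,b\}\ge c'\min\{\sqrt Q,(d/\log(ed))^{1/3}\}
\]
with $c'=\tfrac12\min\{2^{-17/2},c_0^{1/3}\}$. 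This gives the claim with $c=c'/8$.

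The only delicate step is this integer rounding. Strict inequality is needed so that the gap is strictly larger than $\epsilon$, and the floors must not collapse to zero. Both are handled by the margin built into $c_\epsilon$ and $d_0$. All substantive difficulty, namely the fixed-objective construction, is already contained in \cref{thm:integer}.
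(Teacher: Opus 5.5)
Your argument is correct and is essentially the paper's proof. The paper takes $m=\lfloor aM_\epsilon\rfloor$ with $M_\epsilon=\min\{\sqrt{Q},(d/\log(ed))^{1/3}\}$ and a single constant $a$ satisfying $a\le\frac14$, $a^3\le c_0$, $a^2\le 2^{-18}$ (so the certified gap is at least $2\epsilon$), invokes \cref{thm:integer}, and concludes $N\ge dm/8\ge (a/16)\,dM_\epsilon$; you instead take the largest integer meeting the two constraints separately, which changes only the constants. One slip: your first displayed formula for $m$ should read $\lceil 2^{-17/2}\sqrt{Q}\rceil-1$, not $\lceil 2^{9}\sqrt{Q}\rceil-1$, though your ``equivalently'' characterization and all later computations already use the correct value.
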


\begin{corollary}[Matching moderate-accuracy complexity]
\label{cor:matching}
There exist universal constants $c,C,c_\epsilon>0$ and $d_0\in\mathbb{N}$ such that whenever
\begin{equation}
\label{eq:matching-regime}
d\ge d_0,
\qquad
\beta R^2\left(\frac{\log(ed)}{d}\right)^{2/3}
\le\epsilon\le c_\epsilon\beta R^2,
\end{equation}
one has
\begin{equation}
\label{eq:matching-rate}
c d\sqrt{\frac{\beta R^2}{\epsilon}}
\le
N_{\epsilon,\mathrm{det-ad}}^{\mathrm{val}}(d,R,\beta)
\le
C d\sqrt{\frac{\beta R^2}{\epsilon}}.
\end{equation}
Thus the minimax rate is determined up to universal constants throughout \eqref{eq:matching-regime}.
\end{corollary}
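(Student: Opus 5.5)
The corollary follows by combining \cref{thm:upper} with \cref{thm:main}, after checking that the minimum in \eqref{eq:main-rate} is attained by the square-root branch throughout \eqref{eq:matching-regime}.

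We take $d_0$ and $c_\epsilon$ from \cref{thm:main}. If necessary we shrink $c_\epsilon$ to at most $1$, so that the hypothesis $\epsilon\le\beta R^2$ of \cref{thm:upper} holds. For the upper half of \eqref{eq:matching-rate}, we apply \cref{thm:upper} directly with $C=6$; it is valid because $0<\epsilon\le c_\epsilon\beta R^2\le\beta R^2$.

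For the lower half, we apply \cref{thm:main}. The regime \eqref{eq:matching-regime} satisfies $d\ge d_0$ and $\epsilon\le c_\epsilon\beta R^2$, so
\[
N_{\epsilon,\mathrm{det-ad}}^{\mathrm{val}}(d,R,\beta)\ge c d\min\left\{\sqrt{\beta R^2/\epsilon},\ (d/\log(ed))^{1/3}\right\}.
\]
The left inequality of \eqref{eq:matching-regime} rearranges to
\[
\frac{\beta R^2}{\epsilon}\le\left(\frac{d}{\log(ed)}\right)^{2/3}.
\]
Taking square roots gives $\sqrt{\beta R^2/\epsilon}\le (d/\log(ed))^{1/3}$. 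Hence the minimum equals $\sqrt{\beta R^2/\epsilon}$, and the lower bound is $c d\sqrt{\beta R^2/\epsilon}$ with the same constant $c$.

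One consistency point needs attention: the regime must be nonempty, and the constants must not conflict. Nonemptiness requires $(\log(ed)/d)^{2/3}\le c_\epsilon$, which holds once $d$ is large. We therefore enlarge $d_0$ if needed. If for some $d$ the interval in \eqref{eq:matching-regime} is empty, the statement is vacuous for that $d$ anyway.

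None of the steps is a genuine obstacle. The only care needed is to pick a single set of universal constants $c, C, c_\epsilon, d_0$ that works simultaneously for both cited theorems, namely $c_\epsilon\le 1$ and $d_0$ at least the value required by \cref{thm:main}.
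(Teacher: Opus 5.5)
Your proposal is correct and is essentially the paper's proof: the regime is equivalent to $1/c_\epsilon\le\beta R^2/\epsilon\le(d/\log(ed))^{2/3}$, so the square-root branch of \cref{thm:main} gives the lower bound, and \cref{thm:upper} gives the upper bound. You also make explicit that $c_\epsilon\le 1$ is all that is needed for $C=6$ to work directly; the paper's own choice $c_\epsilon=a^2/4\le 1/64$ already satisfies this, and the paper only says vaguely that the constant is enlarged.
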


\begin{corollary}[Two lower-bound regimes]
\label{cor:regimes}
Put $Q=\beta R^2/\epsilon$.  Subject to the universal thresholds in \cref{thm:main}:
\begin{enumerate}[label=(\roman*)]
\item if $Q\le(d/\log(ed))^{2/3}$, then
\[
N_{\epsilon,\mathrm{det-ad}}^{\mathrm{val}}(d,R,\beta)
\ge c d\sqrt Q;
\]
\item if $Q\ge(d/\log(ed))^{2/3}$, then
\[
N_{\epsilon,\mathrm{det-ad}}^{\mathrm{val}}(d,R,\beta)
\ge c\frac{d^{4/3}}{\log^{1/3}(ed)}.
\]
\end{enumerate}
\end{corollary}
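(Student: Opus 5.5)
This corollary is a direct specialization of \cref{thm:main}, so the plan is simply to plug each case of the minimum in \eqref{eq:main-rate} into the corresponding range of $Q$. I take $c$, $c_\epsilon$ and $d_0$ to be exactly the constants of \cref{thm:main}; the phrase ``subject to the universal thresholds'' is read as $d\ge d_0$ and $0<\epsilon\le c_\epsilon\beta R^2$, which is equivalent to $Q\ge 1/c_\epsilon$. Under these hypotheses, \cref{thm:main} gives
\[
N_{\epsilon,\mathrm{det-ad}}^{\mathrm{val}}(d,R,\beta)\ge c d\min\{\sqrt Q,\bar d^{1/3}\},
\qquad \bar d:=d/\log(ed),
\]
and it remains only to identify which term of the minimum is active.

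\emph{Case (i).} Suppose $Q\le\bar d^{2/3}$. Taking square roots of this nonnegative inequality gives $\sqrt Q\le\bar d^{1/3}$, so the minimum equals $\sqrt Q$. The lower bound is therefore $c d\sqrt Q$, as claimed.

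\emph{Case (ii).} Suppose $Q\ge\bar d^{2/3}$. Then $\sqrt Q\ge\bar d^{1/3}$, so the minimum equals $\bar d^{1/3}$. The lower bound becomes
\[
c d\,(d/\log(ed))^{1/3}=c\,d^{4/3}/\log^{1/3}(ed).
\]
At the boundary $Q=\bar d^{2/3}$ the two terms coincide, so both statements hold there.

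No step is a genuine obstacle; the only point needing care is the bookkeeping of hypotheses. The standing assumption $\epsilon\le c_\epsilon\beta R^2$ from \cref{thm:main} must be carried into both cases. In case (ii) this assumption is essentially automatic once $d_0$ is large, because then $\bar d^{2/3}\ge1/c_\epsilon$. In case (i) it remains a genuine hypothesis, and it is the one that the phrase ``subject to the universal thresholds'' refers to.
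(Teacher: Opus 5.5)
Your proposal is correct and is essentially the paper's proof: determine which term of the minimum in \cref{thm:main} is active according to whether $Q\le(d/\log(ed))^{2/3}$, then substitute. Your side remark that the accuracy threshold holds automatically in case (ii) is consistent with the paper's constants, since there $c_\epsilon=a^2/4$ and $d_0$ is chosen so that $(d/\log(ed))^{1/3}\ge 2/a$.
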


\section{Complete proof of the upper bound}
\label{sec:upper}

The upper bound optimizes over the inner ball
\[
\cC:=B_2^d(R/2),
\]
which contains the promised minimizer.  The diameter of $\cC$ is $R$.  Every finite-difference base point lies in $\cC$, so a sufficiently short coordinate displacement remains inside the allowed query ball $X=B_2^d(R)$.

\subsection{A deterministic finite-difference oracle}

\begin{lemma}[Forward differences]
\label{lem:fd}
Let $f\in\cF_\beta(d,R)$, $y\in\cC$, and $0<h\le R/2$.  Define
\begin{equation}
\label{eq:fd}
[g_h(y)]_i
:=
\frac{f(y+he_i)-f(y)}{h},
\qquad i=1,\ldots,d.
\end{equation}
Then all $d+1$ queried points belong to $X$, and
\begin{equation}
\label{eq:fd-error}
\norm{g_h(y)-\nabla f(y)}_2
\le\frac{\beta h\sqrt d}{2}.
\end{equation}
\end{lemma}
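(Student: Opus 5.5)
The plan has two parts: first check that every query stays inside $X$, then bound each coordinate of the gradient error by the standard descent-lemma estimate and sum in quadrature.

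\textbf{Query locations.} The base point $y$ lies in $\cC=B_2^d(R/2)$, so $\norm{y}_2\le R/2$. For each $i$, the triangle inequality gives $\norm{y+he_i}_2\le\norm{y}_2+h\le R/2+R/2=R$. Hence $y$ and all $d$ displaced points lie in $X=B_2^d(R)$, and these are exactly the $d+1$ queried points.

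\textbf{Per-coordinate error.} Since $f$ is $C^1$ with $\beta$-Lipschitz gradient on all of $\R^d$, the fundamental theorem of calculus along the segment from $y$ to $y+he_i$ gives
\[
f(y+he_i)-f(y)-h\,\partial_i f(y)=\int_0^1\ip{\nabla f(y+the_i)-\nabla f(y)}{he_i}\,dt .
\]
The integrand is bounded in absolute value by $\beta t h^2$, so the whole expression is at most $\beta h^2/2$ in absolute value. Dividing by $h$ yields
\[
\bigl|[g_h(y)]_i-\partial_i f(y)\bigr|\le \frac{\beta h}{2}\qquad\text{for each } i .
\]
Convexity gives the sharper one-sided fact that the error is nonnegative, but only the absolute bound is needed here.

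\textbf{Summation.} Summing the squares of these $d$ coordinate errors gives
\[
\norm{g_h(y)-\nabla f(y)}_2^2\le d\,\frac{\beta^2h^2}{4},
\]
and taking square roots gives \eqref{eq:fd-error}.

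There is no genuine obstacle. The only point needing care is that the Lipschitz bound is used along segments that may leave $\cC$. This is harmless because Definition~\ref{def:class} makes the $\beta$-Lipschitz property of $\nabla f$ global.
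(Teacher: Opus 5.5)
Your proof is correct and follows essentially the same route as the paper: the triangle inequality shows the $d+1$ queries lie in $X$, smoothness along each coordinate line (you write out the integral form that the paper invokes implicitly) gives a per-coordinate error of at most $\beta h/2$, and summing the squares gives \eqref{eq:fd-error}.
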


\begin{proof}
The triangle inequality gives
\[
\norm{y+he_i}_2\le\norm y_2+h\le R,
\]
so every query is admissible.  Smoothness along the line $t\mapsto y+te_i$ gives
\[
\left|f(y+he_i)-f(y)-h\,\partial_i f(y)\right|
\le\frac{\beta h^2}{2}.
\]
After division by $h$, each coordinate error is at most $\beta h/2$.  Summing the squared coordinate bounds proves \eqref{eq:fd-error}.
\end{proof}

The next lemma turns a norm-accurate gradient into a two-sided inexact first-order model on the bounded set $\cC$.

\begin{lemma}[Inexact oracle inequalities]
\label{lem:inexact-oracle}
Let $x,y\in\cC$ and suppose
\[
\norm{g-\nabla f(y)}_2\le\delta_g.
\]
Set
\begin{equation}
\label{eq:oracle-errors}
\overline\beta:=2\beta,
\qquad
\delta_\ell:=R\delta_g,
\qquad
\delta_u:=\frac{\delta_g^2}{2\beta}.
\end{equation}
Then
\begin{align}
f(x)
&\ge f(y)+\ip{g}{x-y}-\delta_\ell,
\label{eq:oracle-lower}\\
f(x)
&\le f(y)+\ip{g}{x-y}
+\frac{\overline\beta}{2}\norm{x-y}_2^2+\delta_u.
\label{eq:oracle-upper}
\end{align}
\end{lemma}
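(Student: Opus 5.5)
The plan is to compare both inequalities with the exact first-order model at $y$ and then account for the gradient error $e:=g-\nabla f(y)$, which satisfies $\norm{e}_2\le\delta_g$. Two facts about $\cC=B_2^d(R/2)$ are used throughout. First, $x,y\in\cC$ gives $\norm{x-y}_2\le R$, the diameter of $\cC$. Second, every $f\in\cF_\beta(d,R)$ is convex with $\beta$-Lipschitz gradient on all of $\R^d$, so the standard sandwich holds:
\[
0\le f(x)-f(y)-\ip{\nabla f(y)}{x-y}\le\frac{\beta}{2}\norm{x-y}_2^2 .
\]

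For the lower inequality \eqref{eq:oracle-lower}, start from convexity: $f(x)\ge f(y)+\ip{\nabla f(y)}{x-y}$. Rewrite $\ip{\nabla f(y)}{x-y}$ as $\ip{g}{x-y}-\ip{e}{x-y}$. Cauchy--Schwarz together with the diameter bound then gives $\ip{e}{x-y}\le\delta_g R=\delta_\ell$, which is exactly \eqref{eq:oracle-lower}.

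For the upper inequality \eqref{eq:oracle-upper}, use the smoothness half of the sandwich and make the same substitution. This gives
\[
f(x)\le f(y)+\ip{g}{x-y}-\ip{e}{x-y}+\frac{\beta}{2}\norm{x-y}_2^2 .
\]
The cross term is absorbed with Young's inequality in the form $|\ip{e}{v}|\le\frac{1}{2\beta}\norm{e}_2^2+\frac{\beta}{2}\norm{v}_2^2$. Adding the two quadratic contributions gives $\frac{2\beta}{2}\norm{x-y}_2^2=\frac{\overline\beta}{2}\norm{x-y}_2^2$. The remaining constant is $\delta_g^2/(2\beta)=\delta_u$, matching \eqref{eq:oracle-errors} exactly. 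Note that this step needs neither the boundedness of $\cC$ nor any property of $g$ beyond the error bound.

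There is no genuine obstacle here. The only care needed is choosing the Young weight so that the doubled curvature $\overline\beta=2\beta$ and $\delta_u$ come out precisely as defined, and noting that the diameter $R$ of $\cC$ is used only in the lower bound.
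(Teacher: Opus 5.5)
Your proposal is correct and follows the paper's proof essentially verbatim: convexity with Cauchy--Schwarz and the diameter bound give \eqref{eq:oracle-lower}, and $\beta$-smoothness with Young's inequality gives \eqref{eq:oracle-upper}. The only cosmetic difference is that you apply Young's inequality directly to $\ip{e}{x-y}$, whereas the paper first bounds it by $\delta_g\norm{x-y}_2$ and then uses the scalar form; the resulting estimate is the same.
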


\begin{proof}
Write $e=g-\nabla f(y)$.  Convexity gives
\[
f(x)\ge f(y)+\ip{\nabla f(y)}{x-y}
=f(y)+\ip g{x-y}-\ip e{x-y}.
\]
Since $\operatorname{diam}(\cC)=R$, the last inner product is at most $R\delta_g$ in absolute value, proving \eqref{eq:oracle-lower}.

By $\beta$-smoothness,
\begin{align*}
f(x)
&\le f(y)+\ip{\nabla f(y)}{x-y}
+\frac\beta2\norm{x-y}_2^2\\
&\le f(y)+\ip g{x-y}
+\delta_g\norm{x-y}_2
+\frac\beta2\norm{x-y}_2^2.
\end{align*}
Young's inequality
\[
\delta_g r\le\frac\beta2r^2+\frac{\delta_g^2}{2\beta}
\]
then gives \eqref{eq:oracle-upper} with $\overline\beta=2\beta$.
\end{proof}

\subsection{An error-robust accelerated projected method}

Fix an initial point $x_0=z_0=0\in\cC$, let $A_0=0$, and define
\[
\psi_0(x):=\frac12\norm{x-x_0}_2^2,
\qquad x\in\cC.
\]
For $k=0,\ldots,N-1$, choose the unique $\alpha_{k+1}>0$ satisfying
\begin{equation}
\label{eq:alpha}
A_{k+1}:=A_k+\alpha_{k+1}
=\overline\beta\alpha_{k+1}^2.
\end{equation}
Given $x_k,z_k\in\cC$, set
\begin{equation}
\label{eq:upper-y}
y_{k+1}
:=
\frac{A_kx_k+\alpha_{k+1}z_k}{A_{k+1}}.
\end{equation}
At $y_{k+1}$ obtain $f(y_{k+1})$ and an approximate gradient $g_{k+1}$ satisfying the oracle inequalities in \cref{lem:inexact-oracle}.  Update
\begin{align}
\psi_{k+1}(x)
&:=
\psi_k(x)+\alpha_{k+1}
\left[f(y_{k+1})+\ip{g_{k+1}}{x-y_{k+1}}\right],
\label{eq:psi-update}\\
z_{k+1}
&:=\argminop_{x\in\cC}\psi_{k+1}(x),
\label{eq:z-update}\\
x_{k+1}
&:=
\frac{A_kx_k+\alpha_{k+1}z_{k+1}}{A_{k+1}}.
\label{eq:x-update}
\end{align}
All three points $y_{k+1},z_{k+1},x_{k+1}$ belong to $\cC$.  The subproblem \eqref{eq:z-update} is computationally explicit:
\[
z_{k+1}
=
\Pi_{\cC}\left(x_0-\sum_{i=1}^{k+1}\alpha_i g_i\right),
\]
because the remaining terms in $\psi_{k+1}$ are constant in $x$.

\begin{lemma}[Accelerated estimate with deterministic oracle errors]
\label{lem:accelerated-error}
Suppose the same nonnegative numbers $\delta_\ell,\delta_u$ satisfy \eqref{eq:oracle-lower}--\eqref{eq:oracle-upper} at every iteration.  If $x^\star\in\argminop_{x\in\cC}f(x)$, then
\begin{equation}
\label{eq:accelerated-bound}
f(x_N)-f(x^\star)
\le
\frac{\norm{x^\star-x_0}_2^2}{2A_N}
+(N+1)\delta_\ell+N\delta_u,
\end{equation}
and
\begin{equation}
\label{eq:A-lower}
A_N\ge\frac{N^2}{4\overline\beta}.
\end{equation}
\end{lemma}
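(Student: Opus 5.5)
We prove the bound by the standard estimate-sequence argument, run with the inexact-oracle inequalities of \cref{lem:inexact-oracle} in place of exact first-order information. The two claims we maintain are, for every $k$:
\begin{itemize}
\item an upper bound $\psi_k(x)\le A_k f(x)+\tfrac12\norm{x-x_0}_2^2+(\text{error})$ for all $x\in\cC$;
\item a lower bound $\min_{\cC}\psi_k\ge A_k f(x_k)-(\text{error})$.
\end{itemize}
Combining them at $x=x^\star$ and dividing by $A_N$ gives \eqref{eq:accelerated-bound}. The factor $(N+1)$ on $\delta_\ell$ (rather than $A_N$-weighted) suggests the errors are tracked per iteration after normalization. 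So we should keep track of $\sum_k\alpha_k\delta_\ell$, $\sum_k A_k\delta_u$, and related quantities, and bound their ratios to $A_N$ at the end.

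\textbf{Upper bound on $\psi_N$.} By \eqref{eq:oracle-lower}, each linear model satisfies
\[
f(y_{k+1})+\ip{g_{k+1}}{x-y_{k+1}}\le f(x)+\delta_\ell .
\]
Summing with weights $\alpha_{k+1}$ gives
\[
\psi_N(x)\le A_Nf(x)+\tfrac12\norm{x-x_0}_2^2+A_N\delta_\ell .
\]

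\textbf{Lower bound on $\min_{\cC}\psi_k$, by induction.} Since $\psi_k$ is $1$-strongly convex with minimizer $z_k$ over $\cC$, the first-order optimality condition gives
\[
\psi_k(x)\ge\psi_k(z_k)+\tfrac12\norm{x-z_k}_2^2\qquad(x\in\cC).
\]
Assume $\psi_k(z_k)\ge A_kf(x_k)-E_k$. Then
\[
\psi_{k+1}(z_{k+1})\ge A_kf(x_k)-E_k+\tfrac12\norm{z_{k+1}-z_k}_2^2+\alpha_{k+1}\bigl[f(y_{k+1})+\ip{g_{k+1}}{z_{k+1}-y_{k+1}}\bigr].
\]
We lower bound $f(x_k)\ge f(y_{k+1})+\ip{g_{k+1}}{x_k-y_{k+1}}-\delta_\ell$ using \eqref{eq:oracle-lower} with $x=x_k$. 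The choice \eqref{eq:upper-y} then makes the linear terms combine into
\[
A_{k+1}\bigl[f(y_{k+1})+\ip{g_{k+1}}{x_{k+1}-y_{k+1}}\bigr],
\]
because $A_{k+1}(x_{k+1}-y_{k+1})=\alpha_{k+1}(z_{k+1}-z_k)$. Next, by \eqref{eq:alpha},
\[
\tfrac12\norm{z_{k+1}-z_k}_2^2=\frac{A_{k+1}^2}{2\alpha_{k+1}^2}\norm{x_{k+1}-y_{k+1}}_2^2=\frac{A_{k+1}\overline\beta}{2}\norm{x_{k+1}-y_{k+1}}_2^2 .
\]
Applying \eqref{eq:oracle-upper} at $x=x_{k+1}$ then yields
\[
\psi_{k+1}(z_{k+1})\ge A_{k+1}f(x_{k+1})-E_k-A_k\delta_\ell-A_{k+1}\delta_u .
\]
Hence
\[
E_N=\sum_{k<N}(A_k\delta_\ell+A_{k+1}\delta_u),
\]
and comparing with the upper bound gives
\[
f(x_N)-f(x^\star)\le\frac{\norm{x^\star-x_0}_2^2}{2A_N}+\delta_\ell+\frac{1}{A_N}\sum_{k<N}\bigl(A_k\delta_\ell+A_{k+1}\delta_u\bigr).
\]
Since $A_k\le A_N$, the error terms are at most $\delta_\ell+N\delta_\ell+N\delta_u=(N+1)\delta_\ell+N\delta_u$, which is \eqref{eq:accelerated-bound}.

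\textbf{Growth of $A_N$.} From $\overline\beta\alpha_{k+1}^2=A_{k+1}$ we get $\alpha_{k+1}=\sqrt{A_{k+1}/\overline\beta}$. Then
\[
\sqrt{A_{k+1}}-\sqrt{A_k}=\frac{\alpha_{k+1}}{\sqrt{A_{k+1}}+\sqrt{A_k}}\ge\frac{\alpha_{k+1}}{2\sqrt{A_{k+1}}}=\frac{1}{2\sqrt{\overline\beta}} .
\]
Telescoping from $A_0=0$ gives $\sqrt{A_N}\ge N/(2\sqrt{\overline\beta})$, i.e.\ \eqref{eq:A-lower}.

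\textbf{Main obstacle.} The delicate step is the induction for the lower bound. It needs the constrained strong-convexity inequality on $\cC$, which holds because projection optimality gives $\ip{\nabla\psi_k(z_k)}{x-z_k}\ge0$. It also needs the points $x_k$, $y_{k+1}$, $x_{k+1}$ to lie in $\cC$ so that \cref{lem:inexact-oracle} applies; this follows from convexity of $\cC$. Finally, the error accounting must be kept crude enough to land exactly on $(N+1)\delta_\ell+N\delta_u$.
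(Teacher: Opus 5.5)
Your proposal is correct and follows the paper's proof essentially step for step. It uses the same estimate-sequence induction $\min_{\cC}\psi_k\ge A_kf(x_k)-E_k$ with $E_{k+1}=E_k+A_k\delta_\ell+A_{k+1}\delta_u$, the same lower-oracle bounds at $x_k$ and at $x^\star$, the same upper-oracle bound at $x_{k+1}$, and the same crude estimate $A_k\le A_N$ to reach $(N+1)\delta_\ell+N\delta_u$. Your growth step $\sqrt{A_{k+1}}-\sqrt{A_k}\ge 1/(2\sqrt{\overline\beta})$ is only a rephrasing of the paper's $s_{k+1}\ge s_k+\tfrac12$ with $s_k=\sqrt{\overline\beta A_k}$.
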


\begin{proof}
Write $\psi_k^*:=\min_{x\in\cC}\psi_k(x)=\psi_k(z_k)$.  We first prove by induction that
\begin{equation}
\label{eq:estimate-induction}
\psi_k^*
\ge
A_k f(x_k)-E_k,
\end{equation}
where $E_0=0$ and
\begin{equation}
\label{eq:E-recursion}
E_{k+1}
:=E_k+A_k\delta_\ell+A_{k+1}\delta_u.
\end{equation}
The claim is trivial at $k=0$.  Since $\psi_k$ is $1$-strongly convex and $z_k$ minimizes it over the closed convex set $\cC$, first-order optimality and strong convexity give
\begin{equation}
\label{eq:psi-strong}
\psi_k(z_{k+1})
\ge
\psi_k(z_k)+\frac12\norm{z_{k+1}-z_k}_2^2.
\end{equation}
Let $\Delta z_k:=z_{k+1}-z_k$, $\alpha:=\alpha_{k+1}$, $A:=A_{k+1}$, and $y:=y_{k+1}$.  Combining \eqref{eq:psi-update}, \eqref{eq:psi-strong}, and the induction hypothesis yields
\begin{align*}
\psi_{k+1}^*
&\ge
A_k f(x_k)-E_k
+\frac12\norm{\Delta z_k}_2^2
+\alpha\left[f(y)+\ip{g_{k+1}}{z_{k+1}-y}\right].
\end{align*}
The lower oracle inequality at $x_k$ implies
\[
A_k f(x_k)
\ge
A_k\left[f(y)+\ip{g_{k+1}}{x_k-y}-\delta_\ell\right].
\]
Using
\begin{equation}
\label{eq:coupling-identity}
A_k(x_k-y)+\alpha(z_{k+1}-y)=\alpha\Delta z_k,
\end{equation}
we obtain
\begin{align}
\psi_{k+1}^*
&\ge
A f(y)+\alpha\ip{g_{k+1}}{\Delta z_k}
+\frac12\norm{\Delta z_k}_2^2
-E_k-A_k\delta_\ell.
\label{eq:estimate-mid}
\end{align}
On the other hand, \eqref{eq:x-update} and \eqref{eq:upper-y} give
\[
x_{k+1}-y=\frac\alpha A\Delta z_k.
\]
Applying the upper oracle inequality and using $A=\overline\beta\alpha^2$ gives
\begin{align*}
A f(x_{k+1})
&\le
A f(y)+\alpha\ip{g_{k+1}}{\Delta z_k}
+\frac{\overline\beta\alpha^2}{2A}\norm{\Delta z_k}_2^2
+A\delta_u\\
&=
A f(y)+\alpha\ip{g_{k+1}}{\Delta z_k}
+\frac12\norm{\Delta z_k}_2^2
+A\delta_u.
\end{align*}
Comparison with \eqref{eq:estimate-mid} proves \eqref{eq:estimate-induction} at $k+1$ with \eqref{eq:E-recursion}.

Evaluate $\psi_N$ at $x^\star$.  The lower oracle inequality gives
\[
f(y_i)+\ip{g_i}{x^\star-y_i}
\le f(x^\star)+\delta_\ell,
\]
so
\begin{equation}
\label{eq:psi-star-upper}
\psi_N^*
\le
\psi_N(x^\star)
\le
\frac12\norm{x^\star-x_0}_2^2
+A_Nf(x^\star)+A_N\delta_\ell.
\end{equation}
Combining \eqref{eq:estimate-induction} and \eqref{eq:psi-star-upper}, and observing that $A_k\le A_N$, gives
\begin{align*}
f(x_N)-f(x^\star)
&\le
\frac{\norm{x^\star-x_0}_2^2}{2A_N}
+\delta_\ell+\frac{E_N}{A_N}\\
&\le
\frac{\norm{x^\star-x_0}_2^2}{2A_N}
+(N+1)\delta_\ell+N\delta_u,
\end{align*}
which is \eqref{eq:accelerated-bound}.

Finally, put $s_k:=\sqrt{\overline\beta A_k}$.  Equation \eqref{eq:alpha} implies
\[
s_{k+1}^2-s_k^2=s_{k+1},
\qquad
s_{k+1}=\frac{1+\sqrt{1+4s_k^2}}{2}\ge s_k+\frac12.
\]
Since $s_0=0$, induction gives $s_N\ge N/2$, which is equivalent to \eqref{eq:A-lower}.
\end{proof}

\subsection{Choice of finite-difference accuracy}

\begin{proof}[Proof of \cref{thm:upper}]
Fix $0<\epsilon\le\beta R^2$ and set
\begin{equation}
\label{eq:upper-choices}
N:=\left\lceil2\sqrt{\frac{\beta R^2}{\epsilon}}\right\rceil,
\qquad
\delta_g:=\frac{\epsilon}{8R(N+1)},
\qquad
h:=\frac{2\delta_g}{\beta\sqrt d}.
\end{equation}
Since $\epsilon\le\beta R^2$ and $N\ge2$, one has $h\le R/2$.  At each $y_{k+1}$, use the $d+1$ exact values in \eqref{eq:fd} to form $g_{k+1}=g_h(y_{k+1})$.  By \cref{lem:fd},
\[
\norm{g_{k+1}-\nabla f(y_{k+1})}_2\le\delta_g.
\]
All base points lie in $\cC$ by construction, and all displaced points lie in $X$ by \cref{lem:fd}; the final output $x_N$ lies in $\cC$.

Apply \cref{lem:accelerated-error} with the errors in \eqref{eq:oracle-errors}.  Since $x_0=0$ and $x^\star\in B_2^d(R/2)$,
\[
\frac{\norm{x^\star-x_0}_2^2}{2A_N}
\le
\frac{\beta R^2}{N^2}
\le\frac\epsilon4.
\]
Moreover,
\[
(N+1)\delta_\ell
=(N+1)R\delta_g
=\frac\epsilon8,
\]
and
\[
N\delta_u
=
\frac{N\epsilon^2}{128\beta R^2(N+1)^2}
\le\frac\epsilon{128},
\]
where the final inequality uses $\epsilon\le\beta R^2$.  Hence
\[
f(x_N)-f^\star
\le
\left(\frac14+\frac18+\frac1{128}\right)\epsilon
<\epsilon.
\]
Each iteration uses $d+1$ calls.  Since $\beta R^2/\epsilon\ge1$,
\[
(d+1)N
\le2d\left(2\sqrt{\frac{\beta R^2}{\epsilon}}+1\right)
\le6d\sqrt{\frac{\beta R^2}{\epsilon}}.
\]
This proves \eqref{eq:upper-rate}.
\end{proof}

\section{Architecture of the lower bound}
\label{sec:overview}

The remainder of the proof establishes \cref{thm:integer,thm:main}.  The construction has a scalar chain length $m$ and a robustness radius $\rho$.  A biased maximum
\[
g_{m,\rho}(y)=\max_{1\le i\le m}\{y_i-8\rho i\}
\]
is smoothed by a Moreau envelope $h_{m,\rho}$.  Its dual problem is a concave quadratic maximization over the simplex.  When all coordinates from index $i$ onward lie in $[-\rho,\rho]$, moving any dual mass from a later coordinate to coordinate $i$ strictly improves the dual objective.  Hence every dual optimizer is supported on the first $i$ coordinates, and the smoothed value is exactly independent of the remaining tail.

The $m$ chain coordinates are embedded through an orthonormal frame $U=(u_1,\ldots,u_m)$.  Queries are grouped into blocks of size
\[
b=\left\lfloor\frac d4\right\rfloor.
\]
All replies in block $j$ are generated before $u_j$ is selected.  We then choose $u_j$ orthogonal to every point of the current block and with projection at most $\rho$ on every earlier query.  The current coordinate is therefore exactly zero, while every future coordinate is eventually made small.  Exact shielding makes the online reply independent of all unselected directions.

After the final query, one additional sentinel direction is selected orthogonal to the terminal output.  The remaining directions are chosen with small projection on all queries and the output.  Consequently the output has not reached the final chain coordinate.  A comparison point with all chain coordinates equal to $-R_0/\sqrt m$ then yields objective gap $\Omega(\beta R^2/m^2)$.

The geometric cost is the simultaneous small-projection requirement.  Since all queries have norm at most $R$, a spherical-cap union bound finds the required direction whenever
\[
d\rho^2/R^2\gtrsim\log(d^2).
\]
The optimization calibration uses $\rho\asymp R/m^{3/2}$, giving the condition $m^3\log(ed)\lesssim d$.

\section{A smooth robust value chain}
\label{sec:chain}

Fix $m\ge1$ and $\rho>0$.  Define
\begin{equation}
\label{eq:gdef}
g_{m,\rho}(y):=\max_{1\le i\le m}\{y_i-8\rho i\},
\qquad y\in\R^m,
\end{equation}
and
\begin{equation}
\label{eq:hdef}
h_{m,\rho}(y)
:=
\inf_{z\in\R^m}
\left\{
 g_{m,\rho}(z)+\frac1\rho\norm{z-y}_2^2
\right\}.
\end{equation}
This is the Moreau envelope with parameter $\rho/2$.

\subsection{Smoothness and dual representation}

\begin{lemma}[Basic Moreau properties]
\label{lem:moreau}
The function $g_{m,\rho}$ is convex and $1$-Lipschitz.  The function $h_{m,\rho}$ is convex and continuously differentiable, and
\begin{align}
0&\le g_{m,\rho}(y)-h_{m,\rho}(y)\le\frac\rho4,
\label{eq:approx}\\
\norm{\nabla h_{m,\rho}(y)}_2&\le1,
\label{eq:hgrad}\\
\Lip(\nabla h_{m,\rho})&\le\frac2\rho.
\label{eq:hsmooth}
\end{align}
If $z_y$ is the unique minimizer in \eqref{eq:hdef}, then
\begin{equation}
\label{eq:proxradius}
\norm{z_y-y}_2\le\frac\rho2.
\end{equation}
\end{lemma}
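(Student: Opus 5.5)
The plan is to verify each claim of \cref{lem:moreau} from the definitions \eqref{eq:gdef}--\eqref{eq:hdef} and standard Moreau-envelope facts, taking care with the parameter $\lambda=\rho/2$, since $\frac1\rho\norm{z-y}^2=\frac1{2\lambda}\norm{z-y}^2$.

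\textbf{Properties of $g_{m,\rho}$.} This function is a maximum of affine functions, so it is convex. Each piece $y\mapsto y_i-8\rho i$ is $1$-Lipschitz, since $|y_i-y_i'|\le\norm{y-y'}_2$. A maximum of $1$-Lipschitz functions is again $1$-Lipschitz.

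\textbf{Convexity and the prox point.} The objective in \eqref{eq:hdef} is strongly convex in $z$, so the minimizer $z_y$ exists and is unique. Joint convexity of $(y,z)\mapsto g(z)+\frac1\rho\norm{z-y}^2$ makes the partial infimum $h$ convex.

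\textbf{The two-sided bound \eqref{eq:approx}.} Taking $z=y$ gives $h\le g$. For the other side, use the Lipschitz bound
\[
g(z)+\tfrac1\rho r^2\ge g(y)-r+\tfrac1\rho r^2,\qquad r=\norm{z-y}_2.
\]
Minimizing $-r+r^2/\rho$ over $r\ge0$ gives the value $-\rho/4$, so $h\ge g-\rho/4$.

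\textbf{The prox radius \eqref{eq:proxradius}.} Compare the minimizer with $z=y$:
\[
g(z_y)+\tfrac1\rho r^2\le g(y)\le g(z_y)+r,\qquad r=\norm{z_y-y}_2.
\]
This yields $r^2/\rho\le r$, hence $r\le\rho$. The claim requires the sharper $\rho/2$, which comes from optimality. The condition $0\in\partial g(z_y)+\frac2\rho(z_y-y)$ gives
\[
y-z_y=\tfrac\rho2 v\quad\text{for some } v\in\partial g(z_y).
\]
Since $g$ is $1$-Lipschitz, $\norm v\le1$, so $\norm{z_y-y}\le\rho/2$.

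\textbf{Differentiability and \eqref{eq:hgrad}.} By the standard Moreau theorem, $h$ is $C^1$ with
\[
\nabla h(y)=\tfrac2\rho(y-z_y)=v\in\partial g(z_y).
\]
This gives $\norm{\nabla h(y)}\le1$ directly.

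\textbf{The Lipschitz bound \eqref{eq:hsmooth}.} This is the main step, and I would take it from firm nonexpansiveness of the prox map $P=\mathrm{prox}_{\lambda g}$. The map $I-P$ is also firmly nonexpansive, hence $1$-Lipschitz. Then
\[
\nabla h=\lambda^{-1}(I-P)
\]
is $1/\lambda=2/\rho$-Lipschitz. If a self-contained argument is preferred, I would instead write the optimality conditions at $y$ and $y'$ and use monotonicity of $\partial g$. This gives $\langle z-z',(y-z)-(y'-z')\rangle\ge0$, and expanding shows $\norm{(y-z)-(y'-z')}\le\norm{y-y'}$.

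The only real subtlety is tracking the factor between $\rho$ and $\lambda$.
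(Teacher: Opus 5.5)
Your proposal is correct and follows essentially the same route as the paper. Both use the Lipschitz bound $g(z)\ge g(y)-r$ together with minimizing $-r+r^2/\rho$ for \eqref{eq:approx}, the optimality condition $y-z_y=\tfrac\rho2 v$ with $\norm{v}_2\le1$ for \eqref{eq:proxradius} and \eqref{eq:hgrad}, and firm nonexpansiveness of the proximal map for \eqref{eq:hsmooth}. Your two extras are valid: the partial-minimization argument for convexity of $h_{m,\rho}$, which the paper leaves implicit, and the self-contained monotonicity derivation of the $2/\rho$ bound. The preliminary estimate $r\le\rho$ is correct but unnecessary.
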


\begin{proof}
The function $g_{m,\rho}$ is the maximum of affine functions with slopes $e_1,\ldots,e_m$, hence it is convex and $1$-Lipschitz.  Choosing $z=y$ in \eqref{eq:hdef} gives $h(y)\le g(y)$.  Conversely, with $r=\norm{z-y}_2$,
\[
g(z)+\frac{r^2}{\rho}
\ge g(y)-r+\frac{r^2}{\rho}
\ge g(y)-\frac\rho4,
\]
because the minimum of $-r+r^2/\rho$ over $r\ge0$ is $-\rho/4$.  This proves \eqref{eq:approx}.

The objective in \eqref{eq:hdef} is strongly convex in $z$, so $z_y$ is unique.  First-order optimality gives an $s_y\in\partial g(z_y)$ such that
\[
s_y+\frac2\rho(z_y-y)=0.
\]
Every subgradient of a $1$-Lipschitz convex function has norm at most one, which proves \eqref{eq:proxradius}.  Standard Moreau-envelope calculus gives
\[
\nabla h(y)=\frac2\rho(y-z_y).
\]
Thus \eqref{eq:hgrad} follows, and firm nonexpansiveness of the proximal map gives \eqref{eq:hsmooth}; see, e.g., Bauschke and Combettes~\cite{bauschke2017convex}.
\end{proof}

Let
\[
\Delta_m:=\left\{p\in\R_+^m:\sum_{i=1}^m p_i=1\right\}.
\]

\begin{lemma}[Simplex dual]
\label{lem:dual}
For every $y\in\R^m$,
\begin{equation}
\label{eq:dual}
h_{m,\rho}(y)
=
\max_{p\in\Delta_m}
\left\{
\sum_{i=1}^m p_i(y_i-8\rho i)
-\frac{\rho}{4}\norm{p}_2^2
\right\}.
\end{equation}
\end{lemma}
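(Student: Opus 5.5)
The plan is to write $g_{m,\rho}$ as a support-type function over the simplex and then exchange the infimum over $z$ with the maximum over $p$. Since $g_{m,\rho}(z)=\max_i\{z_i-8\rho i\}$ is a maximum of finitely many affine functions, it equals the linear maximization
\[
g_{m,\rho}(z)=\max_{p\in\Delta_m}\sum_{i=1}^m p_i(z_i-8\rho i),
\]
because a linear function on $\Delta_m$ attains its maximum at a vertex. Substituting this into \eqref{eq:hdef} gives
\[
h_{m,\rho}(y)=\inf_{z\in\R^m}\max_{p\in\Delta_m}L(z,p),
\qquad
L(z,p):=\ip{p}{z}-\sum_{i}8\rho i\,p_i+\frac1\rho\norm{z-y}_2^2 .
\]

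Next I will justify exchanging $\inf$ and $\max$. The function $L$ is convex (indeed strongly convex) and continuous in $z$, and affine hence concave in $p$. The set $\Delta_m$ is compact and convex. Sion's minimax theorem therefore applies, with compactness needed only on the $p$-side, and gives
\[
h_{m,\rho}(y)=\max_{p\in\Delta_m}\inf_{z}L(z,p).
\]
The outer maximum is attained: the function $\inf_zL(\cdot,p)$ is concave, and it is continuous because it turns out to be an explicit quadratic. As an alternative route that avoids citing Sion, one can also conclude from Fenchel duality: $g$ is the conjugate of the indicator of $\Delta_m$ shifted by the bias vector, and the Moreau envelope of $g$ has conjugate $g^*+\tfrac\rho4\norm{\cdot}^2$.

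Finally, I will compute the inner infimum for fixed $p$. Setting the gradient in $z$ to zero gives $p+\tfrac2\rho(z-y)=0$, so $z=y-\tfrac\rho2p$. Substituting,
\[
\ip{p}{y}-\frac\rho2\norm p^2+\frac1\rho\cdot\frac{\rho^2}{4}\norm p^2-\sum_i 8\rho i\,p_i
=\sum_i p_i(y_i-8\rho i)-\frac\rho4\norm p_2^2,
\]
which is exactly \eqref{eq:dual}.

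The only step requiring care is the exchange of $\inf$ and $\max$. Sion's theorem handles it cleanly, so I do not expect a real obstacle; the rest is a routine quadratic minimization.
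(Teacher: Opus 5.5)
Your proposal is correct and follows essentially the same route as the paper: write $g_{m,\rho}$ as a linear maximum over $\Delta_m$, interchange $\inf_z$ and $\max_p$ by a minimax theorem using compactness of the simplex and convexity–concavity, then minimize the quadratic in $z$ at $z=y-\tfrac\rho2p$. You also name Sion's theorem explicitly and note that the outer maximum is attained, which the paper leaves implicit.
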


\begin{proof}
Because the maximum of finitely many scalars equals the maximum over their convex combinations,
\[
g_{m,\rho}(z)
=
\max_{p\in\Delta_m}
\sum_{i=1}^m p_i(z_i-8\rho i).
\]
The simplex is compact and convex; the displayed expression plus $\rho^{-1}\norm{z-y}^2$ is affine in $p$ and strongly convex in $z$.  Minimax interchange is therefore valid.  For fixed $p$, the minimizer over $z$ is
\[
z=y-\frac\rho2p,
\]
and the minimum equals
\[
\sum_i p_i(y_i-8\rho i)-\frac{\rho}{4}\norm{p}_2^2.
\]
Maximizing over $p\in\Delta_m$ proves the claim.
\end{proof}

\subsection{Exact shielding}

Define the robust progress index
\begin{equation}
\label{eq:index}
i_\rho^+(y)
:=
\min\left\{
 i\in[m]: |y_k|\le\rho\ \text{for every }k\ge i
\right\},
\end{equation}
with $i_\rho^+(y)=m+1$ if the set is empty.

\begin{lemma}[Exact prefix shielding]
\label{lem:shield}
Fix $j\in[m]$.  If
\begin{equation}
\label{eq:shield-window}
|y_k|\le\rho
\qquad\text{for every }k\ge j,
\end{equation}
then every maximizer of the dual problem \eqref{eq:dual} is supported on
$\{1,\ldots,j\}$, and
\begin{equation}
\label{eq:shield}
h_{m,\rho}(y)
=
h_{m,\rho}(y_1,\ldots,y_j,0,\ldots,0).
\end{equation}
In particular, if $y_j=0$, then
\begin{equation}
\label{eq:shield-zero}
h_{m,\rho}(y)
=
h_{m,\rho}(y_1,\ldots,y_{j-1},0,\ldots,0).
\end{equation}
Consequently, if $i=i_\rho^+(y)\le m$, then the value is exactly
independent of $y_{i+1},\ldots,y_m$.
\end{lemma}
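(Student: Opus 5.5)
The plan is to work entirely with the simplex dual of \cref{lem:dual}. Write $\Phi_y(p)=\sum_i p_i(y_i-8\rho i)-\frac\rho4\norm p_2^2$. This function is strictly concave on $\Delta_m$, so it has a unique maximizer $p^\star$. The first step is to show that $p^\star_k=0$ for every $k>j$, using a mass-transfer argument. Suppose $p\in\Delta_m$ has $p_k=t>0$ for some $k>j$. Let $p'$ be the point obtained by moving this mass to coordinate $j$, so $p'_k=0$ and $p'_j=p_j+t$. The linear part then changes by
\[
t\bigl[(y_j-8\rho j)-(y_k-8\rho k)\bigr]=t\bigl[y_j-y_k+8\rho(k-j)\bigr]\ge t(8\rho-2\rho)=6\rho t,
\]
because $|y_j|,|y_k|\le\rho$ and $k-j\ge1$. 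The quadratic penalty changes by
\[
-\frac\rho4\bigl[(p_j+t)^2-p_j^2-t^2\bigr]=-\frac\rho2 p_jt\ge-\frac\rho2 t.
\]
The net change is therefore at least $\frac{11}{2}\rho t>0$, so $p$ is not a maximizer. This proves the support claim, and it covers every maximizer, not only the unique one.

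The second step derives \eqref{eq:shield}. Let $\tilde y=(y_1,\dots,y_j,0,\dots,0)$. This point also satisfies the window condition \eqref{eq:shield-window}, so its maximizer is supported on $\{1,\dots,j\}$ as well. Hence both $h_{m,\rho}(y)$ and $h_{m,\rho}(\tilde y)$ equal the maximum of $\Phi$ over the face $\{p\in\Delta_m:p_k=0\ (k>j)\}$. On that face, $\Phi_y$ and $\Phi_{\tilde y}$ coincide, since they differ only in the coefficients of $p_k$ for $k>j$. So the two values are equal.

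The third step handles $y_j=0$ and proves \eqref{eq:shield-zero}. When $j=1$ the statement is trivial, since both sides are $h_{m,\rho}(0)$. For $j\ge2$, I would not apply the first step with $j-1$, because $y_{j-1}$ need not lie in the window. Instead I would apply \eqref{eq:shield} twice. By \eqref{eq:shield}, $h_{m,\rho}(y)=h_{m,\rho}(y_1,\dots,y_j,0,\dots,0)$, and since $y_j=0$ this point equals $(y_1,\dots,y_{j-1},0,\dots,0)$. So the identity holds directly with no further shielding needed.

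The final claim follows by taking $j=i_\rho^+(y)$. By definition of the index, \eqref{eq:shield-window} then holds, and \eqref{eq:shield} shows the value depends only on $y_1,\dots,y_i$. The only delicate point in the whole argument is the sign bookkeeping in the mass-transfer inequality. The margin $8\rho$ per index must dominate both the window width $2\rho$ and the quadratic cross term $\frac\rho2$. It does so comfortably, so I expect no real obstacle.
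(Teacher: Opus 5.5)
Your proposal is correct and follows essentially the same route as the paper: you move the mass from a tail coordinate $k>j$ to coordinate $j$, gaining at least $6\rho t$ in the linear part and losing at most $\frac\rho2 t$ in the quadratic cross term. You then observe that the dual objectives for $y$ and $(y_1,\ldots,y_j,0,\ldots,0)$ coincide on the face $\{p_k=0,\ k>j\}$, which contains every maximizer; the $y_j=0$ case and the final claim via $j=i_\rho^+(y)$ are handled exactly as in the paper (your ``apply \eqref{eq:shield} twice'' is really a single application).
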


\begin{proof}
Set $c_k=y_k-8\rho k$.  For every $k>j$, condition
\eqref{eq:shield-window} gives
\[
c_j-c_k
\ge -\rho-8\rho j-(\rho-8\rho k)
=8\rho(k-j)-2\rho
\ge6\rho.
\]
Let $p$ maximize the dual problem \eqref{eq:dual}.  Suppose that
$p_k>0$ for some $k>j$.  Form $\widetilde p\in\Delta_m$ by moving all
mass $p_k$ from coordinate $k$ to coordinate $j$:
\[
\widetilde p_j=p_j+p_k,
\qquad
\widetilde p_k=0,
\qquad
\widetilde p_\ell=p_\ell\quad(\ell\notin\{j,k\}).
\]
The change in the dual objective is
\begin{align*}
&p_k(c_j-c_k)
-\frac\rho4\big[(p_j+p_k)^2-p_j^2-p_k^2\big]\\
&\qquad
=p_k(c_j-c_k)-\frac\rho2p_jp_k
\ge p_k\left(6\rho-\frac\rho2\right)>0,
\end{align*}
a contradiction.  Thus every dual maximizer is supported on
$\{1,\ldots,j\}$.

After replacing $y_{j+1},\ldots,y_m$ by zero, the same argument again
forces every dual maximizer to be supported on $\{1,\ldots,j\}$.  On
that face of the simplex, the two dual objectives coincide, which proves
\eqref{eq:shield}.  If $y_j=0$, the right-hand side of
\eqref{eq:shield} is exactly the vector displayed in
\eqref{eq:shield-zero}.  The final assertion follows by taking
$j=i_\rho^+(y)$.
\end{proof}

\begin{remark}
The proof is global and value-level.  It does not merely show that future affine pieces are inactive near one proximal point; it identifies the support of every optimizer of the exact dual problem.  This distinction is what makes the later fixed-transcript argument rigorous.
\end{remark}

\subsection{A progress-limited gap}

Set
\begin{equation}
\label{eq:R0rho}
R_0:=\frac R4,
\qquad
\rho:=\frac{R_0}{64m^{3/2}}
=\frac{R}{256m^{3/2}},
\end{equation}
and define
\begin{equation}
\label{eq:ycirc}
y^\circ:=-\frac{R_0}{\sqrt m}\1_m.
\end{equation}
Then $\norm{y^\circ}_2=R_0$.

\begin{lemma}[Gap before the final coordinate]
\label{lem:progressgap}
If $i_\rho^+(y)\le m$, then
\begin{equation}
\label{eq:progressgap}
h_{m,\rho}(y)-h_{m,\rho}(y^\circ)
\ge\frac{3R_0}{4\sqrt m}.
\end{equation}
\end{lemma}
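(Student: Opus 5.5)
Since $i:=i_\rho^+(y)\le m$, we have $|y_m|\le\rho$, and in particular the window condition \eqref{eq:shield-window} holds at index $m$. The plan is to bound $h_{m,\rho}(y)$ from below using only $y_m$, and to bound $h_{m,\rho}(y^\circ)$ from above using the approximation bound \eqref{eq:approx}.

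\emph{Lower bound on $h(y)$.} The nonsmooth chain satisfies
\[
g_{m,\rho}(y)\ge y_m-8\rho m\ge-\rho-8\rho m.
\]
By \eqref{eq:approx},
\[
h_{m,\rho}(y)\ge g_{m,\rho}(y)-\frac\rho4\ge-9\rho m-\frac\rho4.
\]
Equivalently, one can use the dual representation \cref{lem:dual} with $p=e_m$, which gives the slightly sharper $h(y)\ge y_m-8\rho m-\rho/4$.

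\emph{Upper bound on $h(y^\circ)$.} By \eqref{eq:approx},
\[
h_{m,\rho}(y^\circ)\le g_{m,\rho}(y^\circ)=\max_i\left\{-\frac{R_0}{\sqrt m}-8\rho i\right\}=-\frac{R_0}{\sqrt m}-8\rho.
\]

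\emph{Combining.} Subtracting the two bounds gives
\[
h(y)-h(y^\circ)\ge\frac{R_0}{\sqrt m}-9\rho m-\frac\rho4+8\rho\ge\frac{R_0}{\sqrt m}-9\rho m.
\]
The final inequality uses $8\rho\ge\rho/4$. With the calibration \eqref{eq:R0rho},
\[
\rho m=\frac{R_0}{64\sqrt m},
\]
so $9\rho m=\tfrac{9}{64}\,R_0/\sqrt m\le\tfrac14\,R_0/\sqrt m$. This yields \eqref{eq:progressgap}.

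Nothing here is a genuine obstacle. The only step to check is that the additive loss from the bias $8\rho m$ and the smoothing error $\rho/4$ stays below $R_0/(4\sqrt m)$, and this is exactly what the choice $\rho=R_0/(64m^{3/2})$ guarantees with room to spare.
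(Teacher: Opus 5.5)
Your proof is correct and follows the paper's argument essentially verbatim: you use $|y_m|\le\rho$ to lower-bound $g_{m,\rho}(y)$ through the last coordinate, compute $g_{m,\rho}(y^\circ)=-R_0/\sqrt m-8\rho$, transfer both bounds to $h_{m,\rho}$ via \eqref{eq:approx}, and absorb the bias loss using $\rho m=R_0/(64\sqrt m)$. The only difference is bookkeeping: you fold $\rho$ into $9\rho m$ and obtain the constant $55/64$, whereas the paper keeps $8\rho m$ and obtains $7/8$; both exceed $3/4$.
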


\begin{proof}
The increasing bias implies
\[
g_{m,\rho}(y^\circ)
=-\frac{R_0}{\sqrt m}-8\rho.
\]
The condition $i_\rho^+(y)\le m$ implies $|y_m|\le\rho$, and hence
\[
g_{m,\rho}(y)
\ge y_m-8m\rho
\ge-\rho-8m\rho.
\]
Using \eqref{eq:approx},
\begin{align*}
h(y)-h(y^\circ)
&\ge g(y)-\frac\rho4-g(y^\circ)\\
&\ge\frac{R_0}{\sqrt m}-8m\rho+\frac{27\rho}{4}\\
&\ge\frac{R_0}{\sqrt m}-8m\rho.
\end{align*}
By \eqref{eq:R0rho}, $8m\rho=R_0/(8\sqrt m)$.  Thus the left-hand side is at least $7R_0/(8\sqrt m)$, which implies \eqref{eq:progressgap}.
\end{proof}

\section{Avoiding spherical caps in a subspace}
\label{sec:cap}

The delayed rotation requires a direction that is exactly orthogonal to the current block and has small projection on all earlier points.  The following elementary lemma supplies it.

\begin{lemma}[Subspace cap avoidance]
\label{lem:cap}
Let $S\subseteq\R^d$ be a $p$-dimensional subspace and let $a_1,\ldots,a_M\in B_2^d(R)$.  If $0<\rho\le R$ and
\begin{equation}
\label{eq:capcond}
p\rho^2
\ge
64R^2\log\bigl(4(M+1)\bigr),
\end{equation}
then there exists a unit vector $u\in S$ such that
\begin{equation}
\label{eq:capconclusion}
|\ip{u}{a_i}|\le\rho,
\qquad i=1,\ldots,M.
\end{equation}
\end{lemma}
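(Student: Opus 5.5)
The plan is a probabilistic argument: draw $u$ uniformly from the unit sphere of $S$ and show that, with positive probability, $|\ip{u}{a_i}|\le\rho$ holds simultaneously for all $i$. Fix an orthonormal basis $w_1,\ldots,w_p$ of $S$ and let $P$ denote the orthogonal projection onto $S$. Since $u\in S$, we have $\ip{u}{a_i}=\ip{u}{Pa_i}$ and $\norm{Pa_i}_2\le R$. It therefore suffices to bound, for a fixed vector $v\in S$ with $\norm v_2\le R$, the probability
\[
\Prob\bigl(|\ip{u}{v}|>\rho\bigr),
\]
and then apply a union bound over the $M$ points.

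For the single-vector tail bound we use rotation invariance. The quantity $\ip{u}{v}$ is distributed as $\norm v_2\, u_1$, where $u_1$ is the first coordinate of a uniform point on $S^{p-1}$. We then invoke the standard spherical-cap estimate
\[
\Prob(|u_1|>t)\le 2e^{-pt^2/2}.
\]
This follows, for instance, from the normalized-volume cap bound $\Prob(u_1>t)\le e^{-pt^2/2}$, valid for $0\le t<1$. To avoid sharp-constant issues we can instead use a Gaussian representation $u=g/\norm g_2$ and prove a weaker bound such as
\[
\Prob(|u_1|>t)\le 2e^{-pt^2/16}+e^{-p/16}.
\]
The Gaussian route works as follows. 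Split on the event $\norm g_2^2\ge p/2$, whose complement has probability at most $e^{-p/16}$ by a chi-square (Laurent--Massart) lower-tail bound. On that event, $|u_1|>t$ forces $|g_1|>t\sqrt{p/2}$, which has probability at most $2e^{-pt^2/4}$. The generous constant $64$ in \eqref{eq:capcond} leaves room for this kind of loss.

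Take $t=\rho/R\le1$. Each failure event then has probability at most about $3e^{-p\rho^2/(16R^2)}$. By \eqref{eq:capcond},
\[
p\rho^2/(16R^2)\ge 4\log(4(M+1)),
\]
so each failure probability is at most $3(4(M+1))^{-4}$, and the union over $M$ events totals less than $1$. For the extra term $e^{-p/16}$ we also need $p$ large. This follows from the same hypothesis together with $\rho\le R$: indeed
\[
p\ge 64\log(4(M+1))\ge 64\log 8,
\]
so $e^{-p/16}\le (4(M+1))^{-4}$. Hence some $u$ in the unit sphere of $S$ avoids all caps, which proves \eqref{eq:capconclusion}. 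The degenerate case $M=0$ is trivial, and the hypothesis forces $p\ge1$.

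The main obstacle is pinning down a clean, correctly-constanted concentration bound for one coordinate of a uniform vector on $S^{p-1}$. Everything else is bookkeeping. I would first cite the classical cap bound $\sigma(\{u_1>t\})\le e^{-pt^2/2}$. If a self-contained proof is preferred, I would fall back on the Gaussian-normalization argument above and check that the constant $64$ absorbs the loss.
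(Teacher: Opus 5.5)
Your proposal is correct. Its Gaussian-normalization version is exactly the paper's proof: $u=g/\norm g_2$ with $g$ standard Gaussian in $S$, a split on $\norm g_2^2\ge p/2$ using the chi-square lower tail $e^{-p/16}$, the Gaussian tail $2e^{-p\rho^2/(4R^2)}$, absorption into $3e^{-p\rho^2/(16R^2)}$ via $\rho\le R$, and a union bound over the $M$ projected points. Your alternative of citing the classical spherical-cap bound $\sigma\{u_1>t\}\le e^{-pt^2/2}$ would also work, and it only improves the constants.
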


\begin{proof}
The case $M=0$ is immediate.  Let $g$ be a standard Gaussian vector in $S\cong\R^p$ and put $u=g/\norm g_2$.  For fixed $a_i$, only $P_Sa_i$ matters.  On the event $\norm g_2\ge\sqrt{p/2}$, the implication
\[
|\ip{u}{a_i}|>\rho
\quad\Longrightarrow\quad
\left|\ip{g}{\frac{P_Sa_i}{\norm{P_Sa_i}_2}}\right|
>\frac{\rho}{R}\sqrt{\frac p2}
\]
holds whenever $P_Sa_i\ne0$.  A Gaussian tail bound and the standard lower-tail estimate for a chi-square variable give
\[
\Prob\{|\ip{u}{a_i}|>\rho\}
\le
2\exp\left(-\frac{p\rho^2}{4R^2}\right)
+\exp\left(-\frac p{16}\right)
\le
3\exp\left(-\frac{p\rho^2}{16R^2}\right),
\]
where the last inequality uses $\rho\le R$.  A union bound yields
\[
\Prob\left\{\max_{1\le i\le M}|\ip{u}{a_i}|>\rho\right\}
\le
3M\exp\left(-\frac{p\rho^2}{16R^2}\right)<1
\]
under \eqref{eq:capcond}.  Therefore a desired unit vector exists.  See Vershynin~\cite{vershynin2018high} for standard Gaussian and spherical concentration estimates.
\end{proof}

\begin{corollary}[A convenient parameter condition]
\label{cor:capparameter}
There is a universal $c_0>0$ such that the following holds.  Suppose $d\ge8$, $m\le d/4$, $M\le d^2/16+1$, $p\ge d/2$, and $\rho$ is given by \eqref{eq:R0rho}.  If
\[
m^3\log(ed)\le c_0d,
\]
then \eqref{eq:capcond} holds.  One may take, for example, $c_0=2^{-24}$ after increasing the harmless dimension threshold.
\end{corollary}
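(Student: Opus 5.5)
We only need to verify \eqref{eq:capcond}, i.e.\ $p\rho^2\ge 64R^2\log(4(M+1))$, under the stated hypotheses. We bound the left side from below and the right side from above, then compare.

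For the left side, insert $\rho=R/(256m^{3/2})$ from \eqref{eq:R0rho} and $p\ge d/2$:
\[
p\rho^2\ge\frac{d}{2}\cdot\frac{R^2}{2^{16}m^3}=\frac{dR^2}{2^{17}m^3}.
\]
Thus it suffices to show
\[
d\ge 2^{23}\,m^3\log\bigl(4(M+1)\bigr).
\]

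For the right side, use $M\le d^2/16+1$, so $4(M+1)\le d^2/4+8$. For $d\ge 8$ we have $d^2/4+8\le d^2\le (ed)^2$. Hence
\[
\log\bigl(4(M+1)\bigr)\le 2\log(ed),
\]
and the sufficient condition becomes
\[
d\ge 2^{24}\,m^3\log(ed).
\]

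This is exactly the hypothesis $m^3\log(ed)\le c_0 d$ with $c_0=2^{-24}$, so any universal $c_0\le 2^{-24}$ works. The side condition $\rho\le R$ in \cref{lem:cap} is immediate, since $m\ge1$ gives $\rho\le R/256$. The hypothesis $m\le d/4$ is not actually used in the verification; it only keeps the parameters in the regime where the lemma is applied later.

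No step is a genuine obstacle. The only point needing care is the elementary inequality $d^2/4+8\le d^2$, which holds once $d^2\ge 32/3$, so $d\ge 8$ certainly suffices. The phrase ``after increasing the harmless dimension threshold'' is not needed: with $c_0=2^{-24}$ the threshold $d\ge 8$ is enough.
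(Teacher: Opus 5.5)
Your proposal is correct and follows the same route as the paper: substitute $\rho^2/R^2=2^{-16}m^{-3}$ and $p\ge d/2$, bound $\log(4(M+1))\le 2\log(ed)$ for $d\ge 8$, and read off $c_0=2^{-24}$. The details you add are accurate: the check $d^2/4+8\le d^2$, the side condition $\rho\le R$, and the observation that $d\ge 8$ already suffices, so no larger dimension threshold is needed.
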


\begin{proof}
Equation \eqref{eq:R0rho} gives
\[
\frac{\rho^2}{R^2}=\frac{1}{2^{16}m^3},
\qquad
\frac{p\rho^2}{R^2}\ge\frac{d}{2^{17}m^3}.
\]
For $d\ge8$ and $M\le d^2/16+1$,
\[
\log(4(M+1))\le2\log(ed).
\]
The conclusion follows from \eqref{eq:capcond} with the displayed conservative value of $c_0$.
\end{proof}

\section{The batched delayed-rotation compiler}
\label{sec:compiler}

Fix a deterministic adaptive algorithm and a total budget $T$.  Let
\begin{equation}
\label{eq:blocksize}
b:=\left\lfloor\frac d4\right\rfloor,
\qquad
T=qb+s,
\qquad
0\le s<b.
\end{equation}
Assume
\begin{equation}
\label{eq:Tless}
T<bm,
\end{equation}
so $0\le q\le m-1$.  We construct orthonormal directions
\[
u_1,\ldots,u_m\in\R^d,
\qquad
U=(u_1,\ldots,u_m)\in\R^{d\times m}.
\]
Define
\begin{equation}
\label{eq:Aeta}
A:=\frac{\beta\rho}{8},
\qquad
\eta:=\frac{\beta}{4096m^2},
\end{equation}
and the direction-independent term
\begin{equation}
\label{eq:q0}
q_0(x)
:=
\frac\beta8\dist\bigl(x,B_2^d(R_0)\bigr)^2
+\frac\eta2\norm{x}_2^2.
\end{equation}

\subsection{Complete blocks}

Suppose that $u_1,\ldots,u_{j-1}$ have been selected before complete block $j$.  Every query $x$ in this block receives the response
\begin{equation}
\label{eq:fullresponse}
r(x)
:=
A h_{m,\rho}\bigl(
\ip{u_1}{x},\ldots,\ip{u_{j-1}}{x},0,\ldots,0
\bigr)+q_0(x).
\end{equation}
The algorithm may adapt within the block, but \eqref{eq:fullresponse} does not depend on $u_j$.  Consequently all points
\[
x_{j,1},\ldots,x_{j,b}
\]
are determined before $u_j$ is selected.

Set
\begin{equation}
\label{eq:Sj}
S_j
:=
\Span\{u_1,\ldots,u_{j-1},x_{j,1},\ldots,x_{j,b}\}^{\perp}.
\end{equation}
Since $j\le q\le m-1$ and $m\le d/4$,
\begin{equation}
\label{eq:Sjdim}
\dim S_j
\ge d-(j-1)-b
\ge\frac d2.
\end{equation}
Apply \cref{lem:cap} inside $S_j$ to all queries from earlier blocks and select a unit $u_j\in S_j$ such that
\begin{align}
|\ip{u_j}{x_t}|&\le\rho
&&\text{for every earlier query }x_t,
\label{eq:oldsmall}\\
\ip{u_j}{x_{j,\ell}}&=0
&&\text{for }\ell=1,\ldots,b.
\label{eq:currentzero}
\end{align}
The second relation is exact because $u_j\in S_j$.

\subsection{The incomplete block, the output, and completion}

After the $q$ complete blocks, answer the remaining $s$ queries by
\begin{equation}
\label{eq:lastresponse}
r(x)
:=
A h_{m,\rho}\bigl(
\ip{u_1}{x},\ldots,\ip{u_q}{x},0,\ldots,0
\bigr)+q_0(x).
\end{equation}
Once all responses are returned, the deterministic terminal output $\widehat x$ is fixed.  Select $u_{q+1}$ in
\[
\Span\{u_1,\ldots,u_q,x_{qb+1},\ldots,x_T,\widehat x\}^{\perp}
\]
so that it also has projection at most $\rho$ on every query from the complete blocks.  This is possible by \cref{lem:cap}: the displayed orthogonal complement has dimension at least $d/2$, because $q\le m-1$, $s+1\le b$, and $m,b\le d/4$.  Thus
\begin{align}
\ip{u_{q+1}}{x_t}&=0,
&&t=qb+1,\ldots,T,
\label{eq:sentquery}\\
\ip{u_{q+1}}{\widehat x}&=0,
\label{eq:sentout}\\
|\ip{u_{q+1}}{x_t}|&\le\rho,
&&t\le qb.
\label{eq:sentold}
\end{align}

Finally, for $k=q+2,\ldots,m$, select $u_k$ successively in
\[
\Span\{u_1,\ldots,u_{k-1}\}^{\perp}
\]
so that
\begin{equation}
\label{eq:futuresmall}
|\ip{u_k}{x_t}|\le\rho
\quad(t=1,\ldots,T),
\qquad
|\ip{u_k}{\widehat x}|\le\rho.
\end{equation}
The available subspace has dimension at least $d-m+1\ge3d/4$.  Every cap-avoidance step controls at most $T+1\le d^2/16+1$ vectors under \eqref{eq:Tless} and $m\le d/4$.  Hence \cref{cor:capparameter} validates all selections.

\begin{remark}[Fixed horizon and early stopping]
A pathwise algorithm that stops in fewer than $T$ calls may be padded by ignored repeated queries.  Therefore it is enough to prove the lower bound for fixed horizons.  The construction above never changes a response after it has been returned.
\end{remark}

\section{One fixed objective realizes the transcript}
\label{sec:fixed}

Once the frame $U$ is complete, define
\begin{equation}
\label{eq:FU}
F_U(x)
:=
A h_{m,\rho}(U^\top x)
+
\frac\beta8\dist\bigl(x,B_2^d(R_0)\bigr)^2
+
\frac\eta2\norm{x}_2^2.
\end{equation}

\begin{proposition}[Exact fixed-objective consistency]
\label{prop:transcript}
Every response generated in \cref{sec:compiler} equals the corresponding value of the single function \eqref{eq:FU}:
\[
r_t=F_U(x_t),
\qquad t=1,\ldots,T.
\]
Consequently, the query sequence and terminal output generated by the online construction are exactly the query sequence and output produced when the algorithm interacts with the fixed objective $F_U$.
\end{proposition}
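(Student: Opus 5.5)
Since $q_0$ appears identically in every response and in $F_U$, the claim reduces to showing that, for every query $x_t$, the chain value $h_{m,\rho}(U^\top x_t)$ equals the truncated value used when the response was generated. There are two kinds of queries, and we treat them separately; in both cases the tool is \cref{lem:shield}.

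\emph{Queries in complete block $j\le q$.} Let $y=U^\top x_t$, so $y_k=\ip{u_k}{x_t}$. By \eqref{eq:currentzero} we have $y_j=0$. Every direction selected after block $j$ has projection at most $\rho$ on every query of block $j$. For $k=j+1,\ldots,q$ this is \eqref{eq:oldsmall}, applied when block $k$ is processed. For $k=q+1$ it is \eqref{eq:sentold}. For $k\ge q+2$ it is \eqref{eq:futuresmall}. Hence $|y_k|\le\rho$ for all $k\ge j$, with $y_j=0$. Then \eqref{eq:shield-zero} gives
\[
h_{m,\rho}(y)=h_{m,\rho}(y_1,\ldots,y_{j-1},0,\ldots,0).
\]
This is exactly the argument of \eqref{eq:fullresponse}, because $y_1,\ldots,y_{j-1}$ are the inner products with the directions already fixed at the time of the response.

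\emph{Queries in the incomplete block.} Here the sentinel plays the role of the current direction, now with index $j=q+1\le m$. By \eqref{eq:sentquery}, $y_{q+1}=0$, and by \eqref{eq:futuresmall}, $|y_k|\le\rho$ for $k\ge q+2$. Applying \eqref{eq:shield-zero} with $j=q+1$ recovers \eqref{eq:lastresponse}. If $s=0$, this case is vacuous.

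The ``consequently'' part follows by induction on $t$. The algorithm is deterministic, and its maps $Q_t$ and $Q_{\mathrm{out}}$ see only the transcript. The first query is the same in both interactions. If the transcripts agree up to round $t-1$, then $x_t$ is the same in both, and its reply is the same because $r_t=F_U(x_t)$. The outputs therefore also coincide.

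The main point requiring care is bookkeeping rather than analysis. We must check that every later direction, including the sentinel and the tail directions, was chosen subject to the $\rho$-constraint against \emph{all} earlier queries. That each selection is actually possible is already guaranteed by \cref{lem:cap} and \cref{cor:capparameter}. We must also check that the shielding lemma holds for arbitrary values of the visible prefix coordinates, which it does because its hypothesis concerns only indices $k\ge j$. I expect no genuine obstacle beyond verifying these index ranges.
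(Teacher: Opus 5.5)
Your proposal is correct and follows essentially the same route as the paper's proof. You reduce to the chain term, apply \eqref{eq:shield-zero} at index $j$ for complete-block queries and at $j=q+1$ for incomplete-block queries, and then conclude by induction over rounds using determinism. Your attribution of the tail bounds to \eqref{eq:oldsmall}, \eqref{eq:sentold}, and \eqref{eq:futuresmall} for the ranges $j<k\le q$, $k=q+1$, and $k\ge q+2$ is a more explicit version of the paper's one-line remark that every later direction is required to satisfy $|y_k|\le\rho$.
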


\begin{proof}
Consider $x=x_{j,\ell}$ in complete block $j$ and write $y=U^\top x$.  By \eqref{eq:currentzero}, $y_j=0$.  Every direction $u_k$ with $k>j$ is selected after this query and is required, either during a later complete block or during final completion, to satisfy $|y_k|\le\rho$.  Thus \eqref{eq:shield-window} holds at the fixed prefix $j$, and \eqref{eq:shield-zero} gives
\[
h_{m,\rho}(U^\top x)
=
h_{m,\rho}\bigl(
\ip{u_1}{x},\ldots,\ip{u_{j-1}}{x},0,\ldots,0
\bigr),
\]
which is precisely the hard-core term in \eqref{eq:fullresponse}.

For a query in the incomplete block, \eqref{eq:sentquery} gives a zero at coordinate $q+1$, while \eqref{eq:futuresmall} controls every later coordinate.  Applying \eqref{eq:shield-zero} with $j=q+1$ turns \eqref{eq:FU} into \eqref{eq:lastresponse}.

Finally, use induction over time.  If all earlier replies agree with $F_U$, determinism gives the same current query; the preceding argument gives the same current reply.  Hence the complete transcript and the terminal output agree with the true interaction with $F_U$.
\end{proof}

The terminal completion also gives
\begin{equation}
\label{eq:outputprogress}
i_\rho^+(U^\top\widehat x)
\le q+1\le m,
\end{equation}
because coordinate $q+1$ is zero by \eqref{eq:sentout} and every later coordinate has magnitude at most $\rho$ by \eqref{eq:futuresmall}.

\section{Admissibility of the hard objective}
\label{sec:admissible}

\begin{proposition}[Convexity, smoothness, and strong-convexity modulus]
\label{prop:smooth}
The function $F_U$ is convex and continuously differentiable.  Moreover,
\[
\Lip(\nabla F_U)<\beta,
\qquad
\mu(F_U)=\eta=\frac{\beta}{4096m^2},
\]
where $\mu(F_U)$ denotes the largest strong-convexity modulus of $F_U$.
In particular, $F_U$ has a unique global minimizer.
\end{proposition}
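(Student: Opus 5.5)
We decompose $F_U$ into its three summands, as in \eqref{eq:FU}, and treat each one separately: the chain term $x\mapsto A h_{m,\rho}(U^\top x)$, the penalty $\tfrac\beta8\dist(x,B_2^d(R_0))^2$, and the quadratic $\tfrac\eta2\norm x_2^2$. Convexity and $C^1$ regularity hold term by term. The chain term is a convex $C^1$ function (\cref{lem:moreau}) composed with a linear map. The squared distance to a closed convex set is convex and $C^1$, with gradient $2(x-\Pi_{B}(x))$. The quadratic is trivially convex and smooth.

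For the Lipschitz bound we add the term-wise constants. Since $U$ has orthonormal columns, $\norm{U}_{\mathrm{op}}=1$, so by \eqref{eq:hsmooth} the chain term has $\nabla$-Lipschitz constant at most $A\cdot\frac2\rho=\frac{\beta}{4}$ by \eqref{eq:Aeta}. The gradient of $\dist(\cdot,B)^2$ is $2(I-\Pi_B)$. Because $I-\Pi_B$ is firmly nonexpansive, hence $1$-Lipschitz, the penalty contributes at most $\frac\beta8\cdot2=\frac\beta4$. The quadratic contributes $\eta=\beta/(4096m^2)\le\beta/4096$. The total is at most $\beta(\tfrac14+\tfrac14+\tfrac1{4096})<\beta$, and this inequality is strict.

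For the strong-convexity modulus we argue in two directions. First, the sum of two convex functions and an $\eta$-strongly convex quadratic is $\eta$-strongly convex, so $\mu(F_U)\ge\eta$. For the reverse inequality we need a segment along which $F_U-\frac\eta2\norm{\cdot}^2$ is affine, because that shows no larger modulus is possible. We will use $m\le d/4<d$ to pick a unit vector $v\perp\Span\{u_1,\dots,u_m\}$ and restrict $F_U$ to a short segment $x=tv$ with $|t|\le R_0$. On this segment $U^\top x=0$, so the chain term is constant, equal to $Ah_{m,\rho}(0)$. The distance term vanishes because $tv\in B_2^d(R_0)$. Hence $F_U(tv)=\text{const}+\frac\eta2t^2$, which is exactly $\eta$-strongly convex along the segment, so $\mu(F_U)\le\eta$. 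Combining the two directions gives $\mu(F_U)=\eta$. Strong convexity together with continuity and coercivity then yields existence and uniqueness of the global minimizer.

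We expect the main care point to be the precise meaning of ``largest strong-convexity modulus''. We interpret it as the supremum of all $\mu$ for which $F_U-\frac\mu2\norm\cdot^2$ is convex. The upper bound then needs only one line segment on which the excess over $\frac\eta2\norm\cdot^2$ is affine, and the orthogonal direction $v$ supplies it. Everything else reduces to standard Moreau-envelope and projection facts already cited in \cref{lem:moreau}.
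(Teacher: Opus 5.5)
Your proposal is correct and follows essentially the same route as the paper: term-wise gradient-Lipschitz constants $\beta/4+\beta/4+\eta<\beta$, the lower bound $\mu(F_U)\ge\eta$ from the quadratic, and the upper bound $\mu(F_U)\le\eta$ via a unit vector $v\perp\operatorname{range}(U)$ with $F_U(tv)=Ah_{m,\rho}(0)+\frac\eta2t^2$ for $|t|\le R_0$. The only difference is presentational: the paper makes your last step explicit with the midpoint strong-convexity inequality at $\pm tv$.
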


\begin{proof}
Each term in \eqref{eq:FU} is convex and continuously differentiable.  Since $U^\top U=I_m$, \eqref{eq:hsmooth} gives
\[
\Lip\bigl(\nabla[A h_{m,\rho}(U^\top\cdot)]\bigr)
\le A\frac2\rho
=\frac\beta4.
\]
For a nonempty closed convex set $C$, the function $x\mapsto\frac12\dist(x,C)^2$ has gradient $x-\Pi_Cx$, which is $1$-Lipschitz.  Therefore the second term in \eqref{eq:FU} has gradient-Lipschitz constant $\beta/4$.  The final quadratic has gradient-Lipschitz constant $\eta$.  Thus
\[
\Lip(\nabla F_U)
\le\frac\beta4+\frac\beta4+\eta<\beta.
\]
The quadratic term makes $F_U$ at least $\eta$-strongly convex and coercive, so a unique minimizer exists.

It remains to show that the modulus is not larger.  Since $m<d$, choose a unit vector $v\in\operatorname{range}(U)^\perp$.  For every $|t|\le R_0$,
\[
U^\top(tv)=0,
\qquad
\dist(tv,B_2^d(R_0))=0,
\]
and hence
\[
F_U(tv)=A h_{m,\rho}(0)+\frac\eta2t^2.
\]
If $F_U$ were $\mu$-strongly convex for some $\mu>\eta$, the midpoint form of strong convexity applied to $tv$ and $-tv$ would imply
\[
F_U(tv)+F_U(-tv)-2F_U(0)\ge\mu t^2.
\]
The left-hand side equals $\eta t^2$, a contradiction for $t\ne0$.  Therefore $\mu(F_U)=\eta$.
\end{proof}

\begin{proposition}[The minimizer lies in the interior promise]
\label{prop:minloc}
Let $x_U^\star$ be the unique minimizer of $F_U$.  Then
\[
\norm{x_U^\star}_2<\frac R2.
\]
\end{proposition}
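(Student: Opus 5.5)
The plan is to use the first-order optimality condition at $x^\star:=x_U^\star$ and show that any point with $\norm{x}_2\ge R/2$ has a gradient with positive inner product against $x$. In fact it suffices to show $\ip{\nabla F_U(x)}{x}>0$ whenever $\norm{x}_2\ge R/2$. At the minimizer, $\nabla F_U(x^\star)=0$, so this inner product would vanish there; hence $\norm{x^\star}_2<R/2$.

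Compute the gradient term by term. Write $y=U^\top x$. Then
\[
\ip{\nabla F_U(x)}{x}
=A\ip{\nabla h_{m,\rho}(y)}{y}
+\frac\beta4\dist\bigl(x,B_2^d(R_0)\bigr)\norm{x}_2
+\eta\norm{x}_2^2,
\]
using $\nabla\frac12\dist(x,C)^2=x-\Pi_Cx$. For the ball $C=B_2^d(R_0)$ this gives $\ip{x-\Pi_Cx}{x}=(\norm{x}_2-R_0)\norm{x}_2$ when $\norm{x}_2\ge R_0$.

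For the hard-core term, \eqref{eq:hgrad} gives $\norm{\nabla h}_2\le1$ and $\norm{y}_2\le\norm{x}_2$, so $A\ip{\nabla h(y)}{y}\ge -A\norm{x}_2$. Putting $r:=\norm{x}_2\ge R/2=2R_0$, we get
\[
\ip{\nabla F_U(x)}{x}\ge r\Bigl(\frac\beta4(r-R_0)-A\Bigr)+\eta r^2
\ge r\Bigl(\frac{\beta R_0}{4}-\frac{\beta\rho}{8}\Bigr)+\eta r^2.
\]
This is strictly positive, since $\rho=R_0/(64m^{3/2})<2R_0$.

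The only care needed is the gradient formula for the distance term and the Cauchy–Schwarz bound on the chain term. Both are immediate from \cref{lem:moreau} and the definitions \eqref{eq:Aeta}, \eqref{eq:R0rho}, so I expect no real obstacle; the margin $R_0$ versus $\rho/2$ is enormous. Alternatively, one could compare values: $F_U(x)-F_U(0)\ge -Ar+\frac\beta8(r-R_0)^2>0$ for $r\ge R/2$. That also works, because $A\cdot R/2$ is tiny compared with $\beta R_0^2/8$.
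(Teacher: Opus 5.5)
Your proof is correct and is essentially the paper's argument: both test the first-order condition $\nabla F_U(x_U^\star)=0$ against the radial direction, bound the chain contribution by $A$ using \eqref{eq:hgrad} and the isometry of $U$, and use the radial gradient $\frac\beta4(r-R_0)$ of the distance term. The only difference is framing: the paper works at the minimizer and gets the explicit bound $\norm{x_U^\star}_2\le R_0+\rho/2$, while you show the radial derivative is positive on all of $\{\norm{x}_2\ge R/2\}$, which is the same computation.
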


\begin{proof}
The hard-core gradient has norm at most $A$ by \eqref{eq:hgrad} and the isometry of $U$.  Write $r=\norm{x_U^\star}_2$.  If $r\le R_0$, there is nothing to prove.  Otherwise let $\nu=x_U^\star/r$.  Outside $B_2^d(R_0)$, the gradient of the distance term is
\[
\frac\beta4(r-R_0)\nu.
\]
Taking the inner product of the first-order condition $\nabla F_U(x_U^\star)=0$ with $\nu$ yields
\[
0
\ge
-A+\frac\beta4(r-R_0)+\eta r.
\]
Therefore
\[
r\le R_0+\frac{4A}{\beta}=R_0+\frac\rho2.
\]
Since $R_0=R/4$ and $\rho\le R_0/64$,
\[
r\le R_0+\frac{R_0}{128}<\frac R2.
\]
\end{proof}

\begin{remark}
The distance-squared term is used only to keep the global minimizer in the promised interior ball.  The final small quadratic supplies uniqueness and strong convexity.  Neither term depends on the hidden frame.
\end{remark}

\section{Terminal error and the integer theorem}
\label{sec:gap}

Define the comparison point
\begin{equation}
\label{eq:xcirc}
x^\circ:=Uy^\circ.
\end{equation}
Because $U$ is an isometry on $\R^m$,
\[
\norm{x^\circ}_2=\norm{y^\circ}_2=R_0,
\qquad
U^\top x^\circ=y^\circ.
\]
Thus the distance penalty vanishes at $x^\circ$, and $x^\circ\in X$.

\begin{proposition}[Output gap]
\label{prop:outputgap}
The fixed objective and terminal point constructed above satisfy
\begin{equation}
\label{eq:outputgap}
F_U(\widehat x)-F_U^\star
\ge
\frac{11}{2^{17}}\frac{\beta R^2}{m^2}
\ge
2^{-17}\frac{\beta R^2}{m^2}.
\end{equation}
\end{proposition}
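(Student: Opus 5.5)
We bound the gap from below by comparing $\widehat x$ with the point $x^\circ=Uy^\circ$ of \eqref{eq:xcirc}. Since $F_U^\star\le F_U(x^\circ)$, it suffices to lower bound $F_U(\widehat x)-F_U(x^\circ)$. We split $F_U$ into its three terms and treat them separately.

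\textbf{Hard-core term.} By \eqref{eq:outputprogress}, $i_\rho^+(U^\top\widehat x)\le m$. Then \cref{lem:progressgap} gives
\[
A\bigl[h_{m,\rho}(U^\top\widehat x)-h_{m,\rho}(y^\circ)\bigr]\ge \frac{3AR_0}{4\sqrt m}.
\]
Substituting $A=\beta\rho/8$ and $\rho=R/(256m^{3/2})$ from \eqref{eq:Aeta} and \eqref{eq:R0rho}, together with $R_0=R/4$, this equals
\[
\frac{3}{4}\cdot\frac{\beta R}{8\cdot256\,m^{3/2}}\cdot\frac{R}{4\sqrt m}
=\frac{3\beta R^2}{2^{15}m^2}=\frac{12}{2^{17}}\frac{\beta R^2}{m^2}.
\]

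\textbf{Distance term.} At $x^\circ$ the distance penalty vanishes, since $\norm{x^\circ}_2=R_0$. At $\widehat x$ it is nonnegative. So this term contributes a nonnegative difference, and we simply drop it.

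\textbf{Quadratic term.} The difference is
\[
\frac\eta2\bigl(\norm{\widehat x}_2^2-\norm{x^\circ}_2^2\bigr)\ge-\frac\eta2R_0^2 .
\]
With $\eta=\beta/(4096m^2)=\beta/(2^{12}m^2)$ and $R_0^2=R^2/16$, this loss is $\beta R^2/(2^{17}m^2)$.

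\textbf{Conclusion and main obstacle.} Adding the three contributions gives
\[
F_U(\widehat x)-F_U^\star\ge(12-1)\,2^{-17}\frac{\beta R^2}{m^2}=\frac{11}{2^{17}}\frac{\beta R^2}{m^2}\ge2^{-17}\frac{\beta R^2}{m^2},
\]
which is \eqref{eq:outputgap}. The only delicate point is that the strong-convexity quadratic does not swamp the chain gain. The numerical constants in $\eta$, $A$ and $\rho$ are calibrated exactly for this, so the work is to verify the arithmetic above. The other ingredients needed, namely admissibility of $\widehat x\in X$ and the identification of the online output with the output under $F_U$, are already supplied by \cref{prop:transcript}.
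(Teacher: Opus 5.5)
Your proposal is correct and matches the paper's proof step for step. You compare with $x^\circ=Uy^\circ$, use \eqref{eq:outputprogress} and \cref{lem:progressgap} to get the chain gain $\frac{3AR_0}{4\sqrt m}=12\cdot 2^{-17}\beta R^2/m^2$, drop the nonnegative distance term, and lose at most $\frac\eta2R_0^2=2^{-17}\beta R^2/m^2$ to the quadratic. One minor point: $\widehat x\in X$ holds because the terminal map $Q_{\mathrm{out}}$ takes values in $X$, not because of \cref{prop:transcript}, and this inequality does not actually use it.
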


\begin{proof}
By \eqref{eq:outputprogress} and \cref{lem:progressgap},
\[
h_{m,\rho}(U^\top\widehat x)-h_{m,\rho}(y^\circ)
\ge\frac{3R_0}{4\sqrt m}.
\]
Since $F_U^\star\le F_U(x^\circ)$, and the radial and quadratic terms at $\widehat x$ are nonnegative,
\begin{align*}
F_U(\widehat x)-F_U^\star
&\ge F_U(\widehat x)-F_U(x^\circ)\\
&\ge
A\frac{3R_0}{4\sqrt m}-\frac\eta2R_0^2.
\end{align*}
Using \eqref{eq:R0rho} and \eqref{eq:Aeta},
\[
A\frac{3R_0}{4\sqrt m}
=
\frac{3\beta R_0^2}{2048m^2},
\qquad
\frac\eta2R_0^2
=
\frac{\beta R_0^2}{8192m^2}.
\]
Their difference is
\[
\frac{11\beta R_0^2}{8192m^2}
=
\frac{11}{2^{17}}\frac{\beta R^2}{m^2},
\]
which proves the proposition.
\end{proof}

\begin{proof}[Proof of \cref{thm:integer}]
For $d\ge8$,
\[
b=\left\lfloor\frac d4\right\rfloor\ge\frac d8.
\]
Thus $T<dm/8$ implies $T<bm$, so the construction in \cref{sec:compiler} applies.  The condition $m^3\log(ed)\le c_0d$ validates every cap-avoidance step by \cref{cor:capparameter}.  Exact fixed-objective consistency follows from \cref{prop:transcript}; membership in $\cF_\beta(d,R)$ follows from \cref{prop:smooth,prop:minloc}; and the terminal error is \eqref{eq:outputgap}.  The exact strong-convexity modulus is the value of $\eta$ in \eqref{eq:Aeta}, by \cref{prop:smooth}.
\end{proof}

\section{From the integer theorem to the dimension--accuracy rate}
\label{sec:rate}

\begin{proof}[Proof of \cref{thm:main}]
Set
\begin{equation}
\label{eq:M}
M_\epsilon
:=
\min\left\{
\sqrt{\frac{\beta R^2}{\epsilon}},
\left(\frac{d}{\log(ed)}\right)^{1/3}
\right\}.
\end{equation}
Let $c_0$ be the constant from \cref{thm:integer}, and choose a universal $a>0$ such that
\begin{equation}
\label{eq:achoice}
a\le\frac14,
\qquad
a^3\le c_0,
\qquad
a^2\le 2^{-18}.
\end{equation}
Choose $d_0$ sufficiently large that
\[
\left(\frac{d}{\log(ed)}\right)^{1/3}\ge\frac2a
\quad\text{and}\quad
a\left(\frac{d}{\log(ed)}\right)^{1/3}\le\frac d4
\]
for all $d\ge d_0$.  Finally set $c_\epsilon=a^2/4$.

If $0<\epsilon\le c_\epsilon\beta R^2$, then
\[
\sqrt{\frac{\beta R^2}{\epsilon}}\ge\frac2a.
\]
The choice of $d_0$ therefore gives $M_\epsilon\ge2/a$.  Define
\[
m:=\lfloor aM_\epsilon\rfloor.
\]
Then
\begin{equation}
\label{eq:mcompare}
\frac a2M_\epsilon\le m\le aM_\epsilon.
\end{equation}
Moreover,
\[
m\le\frac d4,
\qquad
m^3\log(ed)\le a^3M_\epsilon^3\log(ed)\le c_0d.
\]
Thus \cref{thm:integer} applies.  Since $M_\epsilon\le\sqrt{\beta R^2/\epsilon}$ and $m\le aM_\epsilon$,
\[
2^{-17}\frac{\beta R^2}{m^2}
\ge
\frac{2^{-17}}{a^2}\epsilon
\ge2\epsilon.
\]
Hence every budget $T<dm/8$ fails with strict error greater than $\epsilon$.  By \eqref{eq:mcompare},
\[
N_{\epsilon,\mathrm{det-ad}}^{\mathrm{val}}(d,R,\beta)
\ge\frac{dm}{8}
\ge\frac a{16}dM_\epsilon.
\]
Taking $c=a/16$ and substituting \eqref{eq:M} proves \eqref{eq:main-rate}.
\end{proof}

\begin{proof}[Proof of \cref{cor:regimes}]
The first term in \eqref{eq:M} is no larger than the second exactly when
\[
Q\le\left(\frac{d}{\log(ed)}\right)^{2/3}.
\]
Substitution in \cref{thm:main} gives the two cases.
\end{proof}

\begin{proof}[Proof of \cref{cor:matching}]
Condition \eqref{eq:matching-regime} is equivalent to
\[
\frac1{c_\epsilon}
\le
\frac{\beta R^2}{\epsilon}
\le
\left(\frac{d}{\log(ed)}\right)^{2/3}.
\]
The first branch of \cref{thm:main} therefore gives the lower inequality in \eqref{eq:matching-rate}.  The upper inequality follows from \cref{thm:upper}; after enlarging the universal constant, its statement applies uniformly over the same range.
\end{proof}

\section{Discussion and limitations}
\label{sec:discussion}

\subsection{What is near-optimal}

The upper bound in \cref{thm:upper} and the first branch of \cref{thm:main} coincide up to universal constants.  Therefore the exact deterministic minimax complexity is
\[
\Theta\left(d\sqrt{\frac{\beta R^2}{\epsilon}}\right)
\]
throughout \eqref{eq:matching-regime}.  This is the precise near-optimal statement established by the paper.  It is not a claim about every accuracy scale.

\subsection{The role of batching}

A single exact real value can, in principle, carry information about every coordinate.  The proof therefore does not decompose the objective into independent coordinate tasks.  Instead, the transcript itself determines which directions are exposed.  During a block, the algorithm may choose arbitrary adaptive vectors, but all of them are generated before the next hidden direction exists.  That direction is then selected from a large orthogonal complement.  This chronological order turns $\Theta(d)$ scalar calls into at most one unit of chain progress.

\subsection{Why the transcript is genuinely fixed-objective}

A resisting-oracle sketch is insufficient if different rounds are answered by mutually incompatible functions.  Here the directions are completed after the transcript, but \cref{prop:transcript} proves that every earlier reply is exactly the value of the final $F_U$.  Since the algorithm is deterministic, equality of replies inductively forces equality of all query points and of the terminal output.  The hard objective may depend on the algorithm, as is standard in deterministic minimax lower bounds, but it is one fixed function along the realized run.

\subsection{The cubic saturation and the high-accuracy gap}

The construction uses
\[
\rho=\Theta\left(\frac{R}{m^{3/2}}\right).
\]
This scale is needed because the biased chain accumulates $m\rho$ in its offsets, while the useful comparison displacement has coordinates of size $R_0/\sqrt m$.  The cap lemma requires
\[
\frac{d\rho^2}{R^2}\gtrsim\log(T+1).
\]
Since $T=O(dm)=O(d^2)$ in the admissible range, this becomes
\[
m^3\log(ed)\lesssim d.
\]
Thus the lower bound saturates at $d^{4/3}/\log^{1/3}(ed)$.  Improving constants cannot extend the matching square-root branch to $m\asymp d$; a different shielding invariant would be required.  Known value-only localization methods give an $O(d^2\log(d+1)\log(1/\delta))$ relative-error upper bound for continuous convex-body optimization \cite{protasov1996algorithms}.  After translating relative error to the present bounded-range benchmarks, this leaves a polynomial gap between the current smooth lower bound and the high-accuracy endpoint.

\subsection{Bounded versus unrestricted queries}

The norm bound $\norm{x_t}\le R$ enters the lower bound in \cref{lem:cap}.  If an algorithm may query points of unbounded norm, a hidden direction selected from a high-dimensional subspace need not have small inner product with all prior queries.  The global definition and smoothness of $F_U$ do not repair this issue.  Hence \cref{thm:main} is not an unrestricted-query theorem.

\subsection{Deterministic versus randomized algorithms}

The frame $U$ is constructed against the realized transcript of a fixed deterministic algorithm.  This adversarial construction does not itself produce one distribution that is simultaneously hard for all deterministic policies, so it cannot be converted to a randomized lower bound by simply invoking a minimax principle.  A randomized theorem would require an additional distributional or posterior-width argument.

\subsection{Regularity and strong convexity}

The objective class uses the standard optimization notion of smoothness: $f\in C^1$ with globally Lipschitz gradient.  The Moreau envelope and squared-distance penalty need not be $C^2$ on every point, so the theorem should not be quoted for a $C^2$ or $C^\infty$ hard class without an additional smoothing argument.

The hard objectives are $\beta/(4096m^2)$-strongly convex, and the proof above shows that their global strong-convexity modulus is exactly
\[
\frac{\beta}{4096m^2}.
\]
Thus the lower bound is not caused by nonunique minimizers or flat solution sets.  Since the modulus varies with $m$, however, the theorem is not a full fixed-condition-number lower bound and does not yield a $d\sqrt\kappa\log(1/\epsilon)$ complexity characterization.

\section{Conclusion}

We established an explicit bounded-query upper bound and a new deterministic adaptive lower bound for exact-value smooth convex optimization under bounded queries.  Coordinate finite differences and an error-robust accelerated projected method use
\[
O\left(d\sqrt{\frac{\beta R^2}{\epsilon}}\right)
\]
function values.  A Moreau-smoothed max chain, exact prefix shielding, and batched delayed rotations give the lower bound
\[
\Omega\left(
 d\min\left\{
 \sqrt{\frac{\beta R^2}{\epsilon}},
 \left(\frac{d}{\log(ed)}\right)^{1/3}
 \right\}
\right).
\]
The two bounds match up to universal constants in the moderate-accuracy range \eqref{eq:matching-regime}.  Closing the high-accuracy gap, removing the bounded-query assumption, and treating randomized adaptive policies remain separate problems. The research pipeline was carried out almost entirely by
\ResearchAgentSystem{}, which also conducted a Lean-based review of the
manuscript.

\appendix

\section{Auxiliary convex-analytic facts}
\label{app:convex}

For completeness, we record two facts used in the main proof.

\begin{lemma}[Projection and squared distance]
Let $C\subseteq\R^d$ be nonempty, closed, and convex.  Then
\[
D_C(x):=\frac12\dist(x,C)^2
\]
is convex and continuously differentiable with
\[
\nabla D_C(x)=x-\Pi_Cx.
\]
Moreover, $\nabla D_C$ is $1$-Lipschitz.
\end{lemma}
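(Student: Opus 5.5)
We work directly from the variational characterization of the projection onto a nonempty closed convex set $C$. First we recall that $\Pi_C x$ exists and is unique: $y\mapsto\norm{x-y}_2^2$ is strongly convex and coercive on the closed set $C$. Moreover, $p=\Pi_Cx$ is characterized by
\[
\ip{x-p}{y-p}\le0\qquad(y\in C).
\]
From this inequality we derive firm nonexpansiveness. Adding the variational inequalities at $x$ (tested with $y=\Pi_Cx'$) and at $x'$ (tested with $y=\Pi_Cx$) gives
\[
\norm{\Pi_Cx-\Pi_Cx'}_2^2\le\ip{\Pi_Cx-\Pi_Cx'}{x-x'}.
\]
Consequently $\Pi_C$ is $1$-Lipschitz, and expanding $\norm{(x-\Pi_Cx)-(x'-\Pi_Cx')}_2^2$ shows that $I-\Pi_C$ is also firmly nonexpansive, hence $1$-Lipschitz.

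Next we prove the gradient formula with an explicit two-sided Taylor estimate. Write $p=\Pi_Cx$ and $g=x-p$. For the upper bound, $\Pi_Cx$ is a feasible point for $x+v$, so
\[
D_C(x+v)\le\tfrac12\norm{x+v-p}_2^2=D_C(x)+\ip{g}{v}+\tfrac12\norm v_2^2.
\]
For the lower bound, put $p'=\Pi_C(x+v)$. Then
\[
D_C(x+v)=\tfrac12\norm{g+v+(p-p')}_2^2\ge D_C(x)+\ip{g}{v}+\ip{g}{p-p'}+\ip{v}{p-p'}.
\]
Here we drop the nonnegative squared term after expanding $\tfrac12\norm{g+v}_2^2$. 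The variational inequality at $x$ gives $\ip{g}{p'-p}\le0$, so the third term is nonnegative. The last term is bounded below by $-\norm v_2\norm{p-p'}_2\ge-\norm v_2^2$ by nonexpansiveness. Combining the two sides,
\[
|D_C(x+v)-D_C(x)-\ip{g}{v}|\le\norm v_2^2,
\]
so $D_C$ is differentiable with $\nabla D_C(x)=x-\Pi_Cx$.

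This gradient is $1$-Lipschitz, and in particular continuous, by the first step. Convexity then follows because a differentiable function with monotone gradient is convex, and firm nonexpansiveness of $I-\Pi_C$ gives $\ip{\nabla D_C(x)-\nabla D_C(x')}{x-x'}\ge0$. Alternatively, $D_C$ is the infimal convolution of the indicator of $C$ with $\tfrac12\norm\cdot_2^2$, which is jointly convex in $(x,y)$ and minimized over $y$.

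The main obstacle is the lower Taylor bound. There the unknown projection $p'$ must be controlled, and the sign of $\ip{g}{p-p'}$ has to come exactly from the variational inequality, with the remaining cross term absorbed by nonexpansiveness. The other steps are routine.
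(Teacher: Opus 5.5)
Your proof is correct and follows the paper's route. It derives firm nonexpansiveness of $\Pi_C$ and of $I-\Pi_C$ from the variational inequality, obtains the gradient formula by a direct calculation, and gets convexity from monotonicity of the gradient. The only difference is that you carry out the direct calculation explicitly, via the two-sided estimate $|D_C(x+v)-D_C(x)-\ip{g}{v}|\le\norm{v}_2^2$ (in the lower bound you drop both $\tfrac12\norm{v}_2^2$ and $\tfrac12\norm{p-p'}_2^2$), whereas the paper only cites Danskin's theorem or a directional-derivative computation for this step.
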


\begin{proof}
The projection $\Pi_C$ is firmly nonexpansive:
\[
\norm{\Pi_Cx-\Pi_Cy}^2
\le\ip{\Pi_Cx-\Pi_Cy}{x-y}.
\]
Expanding the norm of $(I-\Pi_C)x-(I-\Pi_C)y$ shows that $I-\Pi_C$ is also firmly nonexpansive and hence nonexpansive.  Danskin's theorem, or a direct directional-derivative calculation using uniqueness of the Euclidean projection, gives the gradient formula.  Convexity follows from monotonicity of the gradient.
\end{proof}

\begin{lemma}[Chi-square lower tail]
If $g\sim N(0,I_p)$, then
\[
\Prob\left\{\norm g_2^2\le\frac p2\right\}\le e^{-p/16}.
\]
\end{lemma}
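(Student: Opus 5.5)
The plan is to reduce the event to a statement about the squared Euclidean norm of a standard Gaussian vector and apply a Chernoff (exponential Markov) bound to the lower tail. Write $\norm g_2^2=\sum_{i=1}^p g_i^2$ with independent $g_i\sim N(0,1)$. For any $\lambda>0$,
\[
\Prob\left\{\norm g_2^2\le\frac p2\right\}
=\Prob\left\{e^{-\lambda\norm g_2^2}\ge e^{-\lambda p/2}\right\}
\le e^{\lambda p/2}\,\E e^{-\lambda\norm g_2^2}
=e^{\lambda p/2}(1+2\lambda)^{-p/2},
\]
using the standard Gaussian integral $\E e^{-\lambda g_1^2}=(1+2\lambda)^{-1/2}$ and independence.

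Next I will choose $\lambda$ to make the exponent negative and linear in $p$. The bound equals $\exp\bigl(-\tfrac p2[\log(1+2\lambda)-\lambda]\bigr)$. The optimal choice is $\lambda=1/2$, giving $\exp\bigl(-\tfrac p2(\log 2-\tfrac12)\bigr)$. Since $\log 2-\tfrac12\approx0.193\ge\tfrac18$, this is at most $e^{-p/16}$, which is exactly the claimed bound. The only step requiring care is the numerical inequality $\log 2\ge 5/8$. It holds because $\log 2\ge 0.69>0.625$; alternatively one can use $\log(1+x)\ge x-x^2/2$ at $x=1$, which gives $1/2$, too weak, so a slightly sharper bound such as $\log(1+x)\ge x-x^2/2+x^3/3-x^4/4$, giving $7/12\approx0.583$, is also insufficient. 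The cleanest route is therefore to quote $\log 2>0.69$ directly, or $e^{5/8}<2$, which follows from $e^{5/8}\le 1+\tfrac58+\tfrac{(5/8)^2}{2}\cdot\tfrac{1}{1-5/24}<2$.

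No earlier result of the paper is needed. The main (minor) obstacle is just this constant check; everything else is the routine moment-generating-function computation. The case of general $p\ge1$ needs no separate treatment, since the bound is uniform in $p$.
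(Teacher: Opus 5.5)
Your proposal is correct and follows the paper's proof essentially step for step: Markov's inequality applied to $e^{-\lambda\norm g_2^2}$, the Gaussian moment generating function $(1+2\lambda)^{-p/2}$, and the choice $\lambda=1/2$. Your verification of $\log 2\ge 5/8$ via $e^{5/8}<2$ supplies the numerical check that the paper's final inequality uses without stating it.
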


\begin{proof}
For any $t>0$, Markov's inequality gives
\[
\Prob\left\{e^{-t\norm g^2}\ge e^{-tp/2}\right\}
\le e^{tp/2}\E e^{-t\norm g^2}
=e^{tp/2}(1+2t)^{-p/2}.
\]
Taking $t=1/2$ yields
\[
\exp\left(\frac p4-\frac p2\log2\right)\le e^{-p/16}.
\]
\end{proof}

\section{A compact pseudocode description of the compiler}
\label{app:pseudocode}

\begin{algorithm}[H]
\caption{Batched delayed-rotation fixed-objective lower-bound compiler.}
\label{alg:compiler}
\begin{algorithmic}[1]
\Require A fixed deterministic $T$-call algorithm; admissible $d,m,R,\beta$; block size $b=\lfloor d/4\rfloor$.
\Ensure A transcript, an orthonormal frame $U$, and one fixed hard objective $F_U$.
\State Set $q=\lfloor T/b\rfloor$ and $s=T-qb$.
\For{$j=1,\ldots,q$}
  \State Answer the next $b$ adaptive queries using \eqref{eq:fullresponse}.
  \State After the block is fixed, choose $u_j$ orthogonal to the block and cap-avoiding all earlier queries.
\EndFor
\State Answer the final $s$ adaptive queries using \eqref{eq:lastresponse}.
\State Compute the deterministic output $\widehat x$ from the completed transcript.
\State Choose $u_{q+1}$ orthogonal to the incomplete block and $\widehat x$, and cap-avoiding all complete-block queries.
\State Complete $u_{q+2},\ldots,u_m$ by cap avoidance on all queries and $\widehat x$.
\State Define $F_U$ by \eqref{eq:FU}.
\end{algorithmic}
\end{algorithm}

\section{Constant bookkeeping}
\label{app:constants}

The proof keeps most universal constants symbolic because only their existence matters asymptotically.  One conservative bookkeeping is as follows.  The cap lemma is stated with constant $64$.  For all sufficiently large $d$, the choice $c_0=2^{-24}$ in \cref{cor:capparameter} is valid.  The integer theorem uses the block factor $1/8$ and the error constant $2^{-17}$.  In the passage to \cref{thm:main}, any
\[
a\le\min\left\{\frac14,c_0^{1/3},2^{-9}\right\}
\]
is admissible, after increasing $d_0$.  One may then take
\[
c_\epsilon=\frac{a^2}{4},
\qquad
c=\frac a{16}.
\]
These values are deliberately conservative and are not optimized.

\end{document}